\newtheorem{theorem}{Theorem}[section]
\newtheorem{lemma}[theorem]{Lemma}
\newtheorem{corollary}[theorem]{Corollary}
\theoremstyle{definition}
\newtheorem{example}[theorem]{Example}
\theoremstyle{remark}
\numberwithin{equation}{section}
\newtheorem{question}[theorem]{Question}
\newcommand{\mC}{\ensuremath{\mathbb{C}}}
\newcommand{\mD}{\ensuremath{\mathbb{D}}}
\newcommand{\mN}{\ensuremath{\mathbb{N}}}
\begin{document}

\title{Normality Criteria Concerning Composite Meromorphic Functions}

\author[V. Singh]{Virender Singh}
\address{Department of Mathematics, University of Jammu,
Jammu-180 006, India}
\email{virendersingh2323@gmail.com }

\author[K. S. Charak]{ Kuldeep Singh Charak}
\address{Department of Mathematics, University of Jammu,
Jammu-180 006, India}
\email{kscharak7@rediffmail.com}

\begin{abstract}
In this paper, we prove normality criteria for families of meromorphic functions involving sharing of a holomorphic function by a certain class of differential polynomials. Results in this paper extend the works of different authors carried out in recent years.
\end{abstract}

\renewcommand{\thefootnote}{\fnsymbol{footnote}}
\footnotetext{2010 {\it Mathematics Subject Classification}. 30D45, 30D35.}
\footnotetext{{\it Keywords and phrases}. Normal families, Meromorphic function, Differential monomial, Shared function.}
\footnotetext{The research work of the first author is supported by the CSIR India.}

\maketitle

\section{\textbf{Introduction and Main Results}}
Let $\mathcal{F}$ be a family of meromorphic functions in a domain $D$ with all zeros of multiplicity at least $k$, $P_w\equiv P(z,w):=\prod\limits_{l=1}^{n}(w-a_l(z))$ be a polynomial with holomorphic functions $a_l(z)(1\leq l\leq n)$ as the coefficients, $\alpha(z)$ be a holomorphic function on $D$ such that  $P(z_0,w)-\alpha(z_0)$ has at least two distinct zeros for every $z_0 \in D$; and 
$$M[f]:=f^{n_0} (f')^{n_1} \cdots(f^{(k)} )^{n_k}, \ (k\geq 1),$$
be a differential monomial of $f\in\mathcal{F}$ with degree $\gamma_M:=\sum\limits_{j=0}^{k}n_j$ and weight $\Gamma_M:=\sum\limits_{j=0}^{k}(j+1)n_j$, where $n_0\geq 1$ and $n_j(1\leq j\leq k)$ are non-negative integers such that $\sum\limits_{j=1}^{k} n_j \geq 1$.\\

Regarding the normality of $\mathcal{F}$, we consider the following question:

\medskip

\begin{question}\label{question1} If for every $f,g\in \mathcal{F}$, $P_w\circ M[f]$ and $P_w \circ M[g]$ share $\alpha(z)$ IM, is it true that $\mathcal{F}$ is a normal family?
\end{question}

\medskip

The motivation for proposing this question is by the works of Fang and Yuan \cite{fang-1}, Bergweiler \cite{bergweiler-1}, Yuan, Li and Xiao \cite{yuan-1} and Yuan, Xiao and Wu (\cite{yuan-2},\cite{xiao-1}). In fact, Fang and Yuan \cite[Theorem 4, p.323]{fang-1} proved:
{\it If $\mathcal{G}$ is a family of holomorphic functions in a domain $D$, $P(z)$ is a polynomial of degree at least 2, $\alpha(z)$ is  a holomorphic function such that $P(z)-\alpha (z)$ has at least two distinct zeros, and if $P \circ f(z)\neq\alpha(z)$ for each $f\in\mathcal{G}$, then $\mathcal{G}$ is normal in $D$.} 
Bergweiler \cite[Theorem 4, p.654]{bergweiler-1} generalised this result and proved: {\it If $\mathcal{G}$ is a family of holomorphic functions in a domain $D$, $R(z)$ is a rational function of degree at least 2, $\alpha(z)$ is  a non constant meromorphic function, and if $R \circ f(z)\neq\alpha(z)$ for each $f\in\mathcal{G}$, then $\mathcal{G}$ is normal in $D$.}
Yuan, Li and Xiao \cite{yuan-1} further improved this result and proved: {\it If  $\alpha(z)$ is a non constant meromorphic function, $R(z)$ is a rational function of degree at least $2$, and $R \circ f$ and $R \circ g$ share $\alpha (z)$ IM  for all $f(z), \ g(z)\in\mathcal{G}$, then $\mathcal {G}$ is normal in $D$ if one of the following conditions holds:
\begin{enumerate}
\item $R(z)-\alpha(z_0)$ has at least two distinct zeros or poles for any $z_0\in D$
\item There exists $z_0 \in D$ such that $R(z)-\alpha(z_0):=P(z)/Q(z)$ has only one distinct zero (or pole) $\beta_0$ and suppose that the multiplicities $l$ and $k$ of zeros of $f(z)-\beta_0$ and $\alpha(z)-\alpha(z_0)$ at $z_0$, respectively, satisfy $k\neq lp$ (or $k \neq lq)$, for each $f\in \mathcal{F}$, where $P$ and $Q$ are two co-prime polynomials with degree $p$ and $q$ respectively.
\end{enumerate}
}

\medskip

And in 2011, Yuan, Xiao and Wu (\cite{yuan-2}, \cite{xiao-1}) extended this result and proved:\\

\textbf{Theorem A \cite[Theorem 1.4, p.436]{yuan-2}.} Let $\alpha(z)$ be a holomorphic function, $\mathcal{G}$ be a family of meromorphic functions in a domain $D$ and $P(z)$ be a polynomial of degree at least 3. If $P \circ f(z)$ and $P \circ g(z)$ share $\alpha(z)$ IM for each pair $f, \ g\in\mathcal{G}$ and one of the following conditions holds:
\begin{enumerate}
\item $P(z)-\alpha(z_0)$ has at least three distinct zeros for any $z_0\in D$
\item There exists $z_0 \in D$ such that $P(z)-\alpha(z_0)$ has at most two distinct zeros and $\alpha(z)$ is non constant. Assume that $\beta_0$ is the zero of $P(z)-\alpha(z_0)$ with multiplicity $p$ and that the multiplicities $l$ and $k$ of zeros of $f(z)-\beta_0$ and $\alpha(z)-\alpha(z_0)$ at $z_0$, respectively, satisfy $k\neq lp$, for all $f\in \mathcal{G}$,
\end{enumerate}
then $\mathcal{G}$ is normal in $D$.

\medskip

\textbf{Theorem B \cite[Theorem 1.2, p.2]{xiao-1}.} Let $\alpha(z)$ be a holomorphic function and $\mathcal{G}$ a family of holomorphic functions in a domain $D$. If $P_w \circ f(z)$ and $P_w \circ g(z)$ share $\alpha(z)$ IM for each pair $f, \ g\in\mathcal{G}$ and one of the following conditions holds:
\begin{enumerate}
	\item  $P(z_0,w)-\alpha(z_0)$ has at least two distinct zeros for any $z_0\in D$
  \item There exists $z_0 \in D$ such that $P(z_0,w)-\alpha(z_0)$ has only one distinct zero and $\alpha(z)$ is non constant. Assume that $\beta_0$ is the zero of $P(z_0,w)-\alpha(z_0)$ and that the multiplicities $l$ and $k$ of zeros of $f(z)-\beta_0$ and $\alpha(z)-\alpha(z_0)$ at $z_0$, respectively, satisfy $k\neq lp$, for all $f\in \mathcal{G}$,
\end{enumerate}
then $\mathcal{G}$ is normal in $D$.

\medskip

In this paper we answer Question \ref{question1} as
\begin{theorem}\label{theorem1} If for every $f,g\in \mathcal{F}$, $P_w \circ M[f]$ and $P_w \circ M[g]$ share $\alpha(z)$ IM, then $\mathcal{F}$ is a normal family in $D$.
\end{theorem}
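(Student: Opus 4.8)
The plan is to prove normality locally, so fix $z_0\in D$ and suppose, for contradiction, that $\mathcal{F}$ fails to be normal at $z_0$. Set $\mu:=(\Gamma_M-\gamma_M)/\gamma_M$; since $n_0\ge1$ and $\sum_{j\ge1}n_j\ge1$ one checks that $0<\mu<k$, so the hypothesis that every $f\in\mathcal{F}$ has zeros of multiplicity at least $k$ places us squarely in the range of the Zalcman--Pang rescaling lemma. Applying it produces $f_j\in\mathcal{F}$, $z_j\to z_0$ and $\rho_j\to0^+$ with $g_j(\zeta):=\rho_j^{-\mu}f_j(z_j+\rho_j\zeta)\to g(\zeta)$ locally uniformly in the spherical metric, where $g$ is a non-constant meromorphic function on $\mathbb{C}$ of finite order whose zeros have multiplicity at least $k$. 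The point of this particular $\mu$ is that the $\rho_j$-powers cancel inside the monomial: $M[g_j](\zeta)=M[f_j](z_j+\rho_j\zeta)$, so $M[f_j](z_j+\rho_j\zeta)\to M[g](\zeta)$, and $M[g]$ is a non-constant meromorphic function. Because the coefficients are holomorphic, $a_l(z_j+\rho_j\zeta)\to a_l(z_0)$ and $\alpha(z_j+\rho_j\zeta)\to\alpha(z_0)$ uniformly on compacta, whence $\Psi_j(\zeta):=P_w\circ M[f_j](z_j+\rho_j\zeta)-\alpha(z_j+\rho_j\zeta)\to\Phi(\zeta):=P(z_0,M[g](\zeta))-\alpha(z_0)$. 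Writing $P(z_0,w)-\alpha(z_0)=c\prod_{i=1}^{s}(w-b_i)^{m_i}$ with the $b_i$ distinct, the standing hypothesis gives $s\ge2$, so $\Phi=c\prod_{i=1}^{s}(M[g]-b_i)^{m_i}$.

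Next I would extract the value-distribution consequence of the sharing hypothesis. Fix any $f_1\in\mathcal{F}$ with $\phi_1:=P_w\circ M[f_1]-\alpha\not\equiv0$ (the degenerate alternative, that $P_w\circ M[f]\equiv\alpha$ for every $f\in\mathcal{F}$, confines each $M[f]$ to the finitely many holomorphic branches of the roots of $P(z,w)-\alpha(z)$ and is disposed of by a direct normality argument). One first notes $\Phi\not\equiv0$, since otherwise the non-constant $M[g]$ would take values only in the finite set $\{b_i\}$. Now let $\zeta_0$ be any zero of $\Phi$; by Hurwitz there are $\zeta_j\to\zeta_0$ with $\Psi_j(\zeta_j)=0$, i.e. $w_j:=z_j+\rho_j\zeta_j$ is a zero of $P_w\circ M[f_j]-\alpha$, hence by IM sharing a zero of the fixed $\phi_1$. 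As $w_j\to z_0$ and $\phi_1\not\equiv0$ has isolated zeros, we must have $w_j=z_0$ eventually, forcing $\zeta_0=\lim(z_0-z_j)/\rho_j=:\zeta^\ast$. Thus all zeros of $\Phi$ in $\mathbb{C}$ collapse to the single point $\zeta^\ast$ (or $\Phi$ is zero-free). Consequently $M[g]$ attains the values $b_1,\dots,b_s$ in total at no more than one point of $\mathbb{C}$: at most one $b_{i_0}$ is attained, only at $\zeta^\ast$, and the remaining $s-1\ge1$ values are omitted entirely.

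To finish I would feed this into value-distribution theory for $F:=M[g]$. When $g$ is transcendental, apply the Second Main Theorem with the $s+1$ targets $b_1,\dots,b_s,\infty$; since $\sum_{i}\overline{N}\big(r,1/(F-b_i)\big)=O(\log r)$ we obtain $(s-1)T(r,F)\le\overline{N}(r,F)+S(r,F)+O(\log r)$. Every pole of $F=M[g]$ arises from a pole of $g$ and so has multiplicity at least $\Gamma_M$, whence $\overline{N}(r,F)\le\frac1{\Gamma_M}N(r,F)\le\frac1{\Gamma_M}T(r,F)+O(1)$; as $s\ge2$ and $\Gamma_M\ge3$ this gives $\big(s-1-\tfrac1{\Gamma_M}\big)T(r,F)\le S(r,F)+O(\log r)$ with $s-1-\tfrac1{\Gamma_M}>0$, contradicting $S(r,F)=o(T(r,F))$. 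When $g$ is rational, $F$ is a non-constant rational function whose poles all have multiplicity at least $\Gamma_M$ and which attains the $b_i$ at no more than one finite point; a direct degree (Riemann--Hurwitz) count then yields the contradiction.

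I expect the genuine difficulties to be twofold. First, pinning down the Zalcman--Pang normalization at the exact exponent $\mu=(\Gamma_M-\gamma_M)/\gamma_M$ and verifying that it preserves the multiplicity-$k$ condition on the limit $g$; this is precisely where the hypotheses $n_0\ge1$, $\sum_{j\ge1}n_j\ge1$ and ``all zeros of multiplicity at least $k$'' are used. Second, the exhaustive case analysis in the value-distribution step---especially the rational case and the degenerate situation $P_w\circ M[f]\equiv\alpha$---together with the verifications that $M[g]$ is genuinely non-constant and that the Hayman/Second-Main-Theorem machinery applies uniformly across the possible shapes of $g$.
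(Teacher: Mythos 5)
Your proposal is correct, and at the skeleton level it is the paper's own proof: the identical Zalcman--Pang rescaling with exponent $\beta=(\Gamma_M-\gamma_M)/\gamma_M$ (chosen so that $M[g_j](\zeta)=M[f_j](z_j+\rho_j\zeta)$), the same limit $\Phi=P(z_0,M[g])-\alpha(z_0)$, and the same Hurwitz-plus-IM-sharing device collapsing all zeros of $\Phi$ to a single point; you merely run the two halves in the opposite order (sharing first, giving ``at most one zero'', then a value-distribution contradiction, whereas the paper proves ``at least two distinct zeros'' first and then contradicts via sharing). The genuine divergence is inside the value-distribution step, where your route is leaner. The paper applies the second main theorem to $M[g]$ with only the two targets $h_1,h_2$ to get $\overline{N}(r,g)=S(r,g)$, and then needs a second estimate, $\gamma_M T(r,g)\le(\Gamma_M-\gamma_M)[\overline{N}(r,g)+\overline{N}(r,1/g)]+S(r,g)$, resting on logarithmic-derivative bounds and on the zero-multiplicity hypothesis via $\overline{N}(r,1/g)\le\frac{1}{k}N(r,1/g)$; its rational case further splits into $h_2\ne0$ (quoting a lemma of Zhang--Xu--Yi) and $h_2=0$ (a computation). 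Your single application of the second main theorem to $F=M[g]$ with the $s+1$ targets $b_1,\dots,b_s,\infty$, driven only by the pole multiplicity $\ge\Gamma_M\ge3$, replaces all of that, and your rational case reduces to one degree count (if $F$ omits some $b_i$ then $F=b_i+c/Q$, and confining the remaining $b$-points to one point forces $Q$ to have simple zeros, contradicting multiplicity $\ge\Gamma_M$). Two caveats that you flag but should be written out, especially since the paper also elides them: $M[g]$ must be shown non-constant (equivalently $\Phi\not\equiv0$, without which Hurwitz cannot be invoked), and the degenerate alternative $P_w\circ M[f]\equiv\alpha$ must be excluded (by IM sharing this is an all-or-nothing dichotomy across $\mathcal{F}$; the paper's phrase ``except for finitely many $f_i$'' does not address it). Finally, take your transcendental/rational dichotomy on $F$ itself rather than on $g$: your estimates use only properties of $F$, and this spares you the verification that $M[g]$ is transcendental whenever $g$ is.
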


\begin{example} Consider the family $\mathcal{F}$=$\{$$f_n :n\in\mN\}$, where $$f_n (z)=nz^{k-1};k\geq2$$ in the unit disk $\mD$, $P(z,w)=(w+e^z)(w-e^z)$ and $\alpha(z)\equiv 0$. Then, for every $f_n,f_m\in \mathcal{F}$, $P_w \circ  M[f_n](z)$ and $P_w \circ M[f_m](z)$ share $\alpha(z)$ IM. However, the family $\mathcal{F}$ is not normal in $\mD$. Thus, the condition that every $f\in\mathcal{F}$ has zeros of multiplicity at least $k$ is essential in Theorem \ref{theorem1}.
\end{example}

\begin{example} Consider the family $\mathcal{F}$=$\{$$f_n :n\in\mN\}$, where $$f_n (z)=\frac{1}{nz}$$ in the unit disk $\mD$, $P(z,w)=(w+i ze^z)(w-i ze^z)$ and $\alpha(z)=z^2 e^{2z}$. Then
$$P_w \circ M[f_n](z)=\frac{\prod\limits_{r=1}^{k}{{(r!)^{2n_r}}}}{n^{2 \gamma_M}z^{2 \Gamma_M}} + z^2 e^{2z} $$
Clearly, for every $f_n,f_m\in \mathcal{F}$, $P_w \circ M[f_n](z)$ and $P_w \circ M[f_m](z)$ share $\alpha(z)$ IM. However, the family $\mathcal{F}$ is not normal in $\mD$. Thus, the condition that $P(z_0,w)-\alpha(z_0)$ has at least two distinct zeros for any $z_0\in D$ is essential in Theorem \ref{theorem1}.
\end{example}

Let $S=\{a_i(z)/i=1,2,...,n\}$ be the set of holomorphic functions. Then we say that two meromorphic functions $f$ and $g$ share  the set $S$ IM if $\overline{E}_f (S)=\overline{E}_g(S)$, where $\overline{E}_{\phi}(S)=\{z\in D / \prod\limits_{i=1}^{n}(\phi (z)-a_i(z))=0\}$. In that case we write $f(z)\in S \Leftrightarrow g(z)\in S$.

 From Theorem \ref{theorem1}, we immediately obtain the following corollary by setting $\alpha(z)\equiv0$ and $P(z,w)$ a polynomial in variable $w$ that vanishes exactly on a finite set of holomorphic functions. This result can be seen as an extension of a result due to Fang and Zalcman \cite{fang-2} to a general class of differential monomials.

\begin{corollary}\label{corollary1}  Let $S$ be a finite set of holomorphic functions with at least 2 elements. If, for every $f,g\in \mathcal{F}$ and for $z\in D$, $$M[f](z) \in S\Leftrightarrow M[g](z) \in S,$$ then $\mathcal{F}$ is normal in $D$.
\end{corollary}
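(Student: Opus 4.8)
The plan is to deduce the corollary from Theorem \ref{theorem1} by specializing the data so that the set-sharing hypothesis becomes the IM-sharing hypothesis of the theorem. Write $S=\{a_1(z),\dots,a_n(z)\}$ with $n\geq 2$ distinct holomorphic functions, put $\alpha(z)\equiv 0$, and let $P(z,w)=\prod_{l=1}^{n}(w-a_l(z))$, a polynomial in $w$ with holomorphic coefficients whose composite is $P_w\circ M[f]=\prod_{l=1}^{n}(M[f]-a_l)$. Since $\alpha\equiv 0$, the zero set of $P_w\circ M[f]-\alpha$ is exactly $\{z\in D: M[f](z)=a_l(z)\text{ for some }l\}=\overline{E}_{M[f]}(S)$, and likewise for $g$. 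Hence $M[f]\in S\Leftrightarrow M[g]\in S$ on $D$ is precisely the statement that $P_w\circ M[f]$ and $P_w\circ M[g]$ share $\alpha$ IM. This dictionary is what everything rests on.

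Next I would check that the remaining standing hypotheses of Theorem \ref{theorem1} are in force. The family, the monomial $M$ (with $n_0\geq 1$ and $\sum_{j\geq 1}n_j\geq 1$), and the condition that every $f\in\mathcal{F}$ has zeros of multiplicity at least $k$ are unchanged, so nothing needs verifying there. The one genuine point is the requirement that $P(z_0,w)-\alpha(z_0)=\prod_{l=1}^{n}(w-a_l(z_0))$ have at least two distinct zeros for every $z_0$; equivalently, that the values $a_1(z_0),\dots,a_n(z_0)$ are not all equal. Because the $a_l$ are distinct holomorphic functions, we may pick two of them, say $a_1\not\equiv a_2$; then the exceptional set $E:=\{z\in D: a_1(z)=\dots=a_n(z)\}$ is contained in the zero set of $a_1-a_2\not\equiv 0$ and is therefore discrete in $D$. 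For every $z_0\in D\setminus E$ the two-distinct-zeros condition holds on a small disk about $z_0$ avoiding $E$, so Theorem \ref{theorem1} applies on that disk and $\mathcal{F}$ is normal at $z_0$. Since normality is a local property, $\mathcal{F}$ is normal on $D\setminus E$.

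The main obstacle is the remaining discrete set $E$: normality on a punctured neighbourhood need not extend across its centre (for instance $\{nz\}$ is normal on $\C\setminus\{0\}$ but not at $0$), so the sharing hypothesis must be used at each $z_0\in E$. At such a point all coefficients collapse to a common value $a_1(z_0)=\dots=a_n(z_0)=:c_0$, so $P(z_0,w)=(w-c_0)^n$ and the local problem degenerates to a single-value problem for $M$. To settle it I would re-run, localized at $z_0$, the Zalcman--Pang rescaling argument already underlying the proof of Theorem \ref{theorem1}: assuming $\mathcal{F}$ is not normal at $z_0$ produces $f_m\in\mathcal{F}$, $z_m\to z_0$ and $\rho_m\to 0^{+}$ along which a suitable rescaling of $f_m$ converges locally uniformly to a nonconstant meromorphic $g$ on $\C$ whose zeros have multiplicity at least $k$; since $a_l(z_m+\rho_m\zeta)\to c_0$ for every $l$, the sharing forces $M[g]-c_0$ to have the restricted zero structure that the Hayman/Nevanlinna-type estimates from the proof of Theorem \ref{theorem1} rule out, a contradiction. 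I expect this extension across the isolated coincidence points of the $a_l$ to be the only part requiring real work; the reduction to Theorem \ref{theorem1} on $D\setminus E$ is immediate once the dictionary above is recorded.
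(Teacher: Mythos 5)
Your first paragraph is precisely the paper's proof: set $P(z,w)=\prod_{l=1}^{n}(w-a_l(z))$ and $\alpha\equiv 0$, note that sharing the set $S$ is the same as $P_w\circ M[f]$ and $P_w\circ M[g]$ sharing $\alpha$ IM, and apply Theorem \ref{theorem1}. The issue you raise in your second paragraph is real, but the paper resolves it by convention rather than by argument: Corollary \ref{corollary1} is read with the standing hypothesis of Section 1, namely that $P(z_0,w)-\alpha(z_0)$ has at least two distinct zeros for \emph{every} $z_0\in D$, which for this $P$ and $\alpha\equiv 0$ says exactly that the functions in $S$ never all take the same value at any point of $D$. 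Under that reading your exceptional set $E$ is empty, and your first two paragraphs already constitute the complete (and correct) proof.

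The genuine gap is your third paragraph: at a point $z_0\in E$ the conclusion is not merely resistant to the rescaling argument, it is false, so no argument can close that case. Take $D=\mathbb{D}$, $S=\{z,-z\}$, $M[f]=ff'$ (so $k=1$ and the zero-multiplicity hypothesis is vacuous), and $\mathcal{F}=\{f_n(z)=nz:\ n\geq 2\}$ --- the very family you cite as a cautionary example. Then $M[f_n](z)=n^2z$, and $(M[f_n](z)-z)(M[f_n](z)+z)=(n^4-1)z^2$ vanishes only at $z=0$; hence every pair of members of $\mathcal{F}$ shares $S$ in the required sense, while $f_n^{\#}(0)=n\to\infty$ shows, via Marty's criterion, that $\mathcal{F}$ is not normal at the coincidence point $0\in E$. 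One can see exactly where your sketch breaks down: the Zalcman limit here (with $\beta=\Gamma_M/\gamma_M-1=1/2$ and $\rho_n=n^{-2}$) is $g(\zeta)=\zeta$, so the limit function $P(0,M[g](\zeta))-\alpha(0)=(M[g](\zeta))^2=\zeta^2$ has a single distinct zero; consequently both the Nevanlinna-type estimates in the proof of Theorem \ref{theorem1} (which require two distinct values $h_1\neq h_2$ that $M[g]$ cannot simultaneously avoid) and the concluding Hurwitz-plus-sharing step (which requires two zeros lying in disjoint neighborhoods $N_1$, $N_2$) have nothing to work with. This is the same degeneration as in the paper's second example, $P(z,w)=(w+ize^z)(w-ize^z)$ with $\alpha(z)=z^2e^{2z}$, which is included there precisely to show that the two-distinct-zeros condition cannot be dropped. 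So the correct fix is not to extend across $E$, but to build the nondegeneracy condition (at every point of $D$, at least two functions of $S$ take distinct values) into the reading of the corollary, as the paper implicitly does.
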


Further, one can ask what can be said about normality of family $\mathcal{F}$ if set $S$ in Corollary \ref{corollary1} has cardinality equal to one. In this direction, we prove the following result which in turn extends the results of Yunbo and Zongsheng \cite{yunbo-1}, Meng and Hu \cite{meng-1}, Charak and Sharma \cite{charak-1} and Sun \cite{sun-1}.

\begin{theorem}\label{theorem2} Let $n_0,n_1,n_2,...,n_k,k$ be the nonnegative integers such that $n_0\geq2$, $n_k\geq1$ and $k\geq1$. Let $\mathcal{G}$ be a family of meromorphic functions in a domain $D$ and $\alpha(z)\not\equiv0$ be a polynomial of degree $p$. Suppose that each $f\in\mathcal{G}$ has zeros of multiplicity at least $k+p+1$ and poles of multiplicity at least $p+1$. If, for every $f,g\in \mathcal{G}$, 
$M[f](z)$ and $M[g](z)$ share $\alpha(z)$ IM, then $\mathcal{G}$ is normal in $D$.
\end{theorem}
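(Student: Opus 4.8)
The plan is to argue locally and by contradiction, combining the Pang--Zalcman rescaling lemma with a Picard/Hayman-type theorem for the monomial $M$ and the classical ``two distinct zeros'' mechanism for shared functions.

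Since normality is local and $\alpha$ is a polynomial with only finitely many zeros, it suffices to prove normality at an arbitrary $z_0\in D$, and we may shrink $D$ to a disc about $z_0$ containing no zero of $\alpha$ other than possibly $z_0$. Put $s:=\operatorname{ord}_{z_0}\alpha$, so $s=0$ when $\alpha(z_0)\neq0$ and $1\le s\le p$ otherwise. Assume $\mathcal G$ is not normal at $z_0$ and apply the Pang--Zalcman lemma: there are $f_j\in\mathcal G$, $z_j\to z_0$ and $\rho_j\to0^+$ with
$$g_j(\zeta):=\rho_j^{-\beta}f_j(z_j+\rho_j\zeta)\longrightarrow g(\zeta)$$
locally uniformly in the spherical metric, $g$ nonconstant and meromorphic on $\mathbb C$ of finite order. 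I choose $\beta$ so that the monomial rescales cleanly, namely $\beta:=(s+\Gamma_M-\gamma_M)/\gamma_M$, for which a direct computation (using $f_j^{(m)}(z_j+\rho_j\zeta)=\rho_j^{\beta-m}g_j^{(m)}(\zeta)$) gives
$$M[f_j](z_j+\rho_j\zeta)-\alpha(z_j+\rho_j\zeta)=\rho_j^{s}\bigl(M[g_j](\zeta)-Q_j(\zeta)\bigr),\qquad Q_j(\zeta):=\rho_j^{-s}\alpha(z_j+\rho_j\zeta).$$
After passing to a subsequence (adjusting $\beta$ slightly if $(z_j-z_0)/\rho_j$ is unbounded), $Q_j\to L$ locally uniformly, where $L$ is a nonzero polynomial of degree at most $p$, with $L\equiv\alpha(z_0)$ when $s=0$. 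The multiplicity hypotheses on $\mathcal G$ are what make $\beta$ admissible in the lemma and guarantee that $g$ again has zeros of multiplicity $\ge k+p+1$ and poles of multiplicity $\ge p+1$. Since $g_j\to g$ forces $g_j^{(m)}\to g^{(m)}$ off the poles of $g$, we have $M[g_j]\to M[g]$, so the limiting relation is $M[g]=L$.

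Next I feed the sharing hypothesis into the limit. IM-sharing of $\alpha$ means the set $E:=\{z\in D:M[f](z)=\alpha(z)\}$ is the same for every $f\in\mathcal G$. Suppose $M[g]-L$ has two distinct zeros $\zeta_0\neq\zeta_0'$. By Hurwitz's theorem there are $\zeta_j\to\zeta_0$ and $\zeta_j'\to\zeta_0'$ with $M[f_j]=\alpha$ at $w_j:=z_j+\rho_j\zeta_j$ and at $w_j':=z_j+\rho_j\zeta_j'$; thus $w_j,w_j'\in E$ and both tend to $z_0$, while $w_j\neq w_j'$ for large $j$ because $\zeta_0\neq\zeta_0'$. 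Now fix any $h\in\mathcal G$: since $E$ is also the zero set of $M[h]-\alpha$, the analytic function $M[h]-\alpha$ has zeros $w_j$ (and $w_j'$) accumulating at $z_0\in D$, so by the identity theorem $M[h]\equiv\alpha$. However, a member of $\mathcal G$ cannot satisfy $M[h]\equiv\alpha$: any zero of $h$ (multiplicity $\ge k+p+1$) would force $M[h]$, hence $\alpha$, to vanish there to order at least $(k+p+1)\gamma_M-(\Gamma_M-\gamma_M)\ge(p+1)\gamma_M>p$, impossible for the degree-$p$ polynomial $\alpha$; likewise $h$ can have no pole, so $h=e^{H}$, and then $M[h]\equiv\alpha$ fails by a growth/order comparison. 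This contradiction shows that $M[g]-L$ has at most one distinct zero.

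The crux is therefore the following Picard/Hayman-type statement, which I expect to be the main obstacle: \emph{a nonconstant meromorphic function $g$ on $\mathbb C$ of finite order, with zeros of multiplicity $\ge k+p+1$, poles of multiplicity $\ge p+1$ and with $n_0\ge2$, must have $M[g]-L$ with at least two distinct zeros (in particular $M[g]\not\equiv L$), where $L$ is a nonzero polynomial of degree $\le p$.} Granting it, the previous paragraph yields a contradiction, so no nonconstant limit $g$ can occur and $\mathcal G$ is normal at $z_0$. For transcendental $g$ the statement follows from a Hayman-type inequality for the differential monomial $g^{n_0}(g')^{n_1}\cdots(g^{(k)})^{n_k}$, the hypothesis $n_0\ge2$ being precisely what prevents $M[g]$ from taking the value $L$ at only finitely many points. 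The rational case is the delicate one: writing $g$ as a quotient of polynomials and comparing, through a careful zero--pole count, the zeros of $M[g]-L$ against the contributions forced by the thresholds $k+p+1$ and $p+1$, one rules out ``at most one distinct zero''; it is exactly the matching of these thresholds to $\deg L\le p$ that produces the contradiction. Assembling the cases over all $z_0\in D$ gives the normality of $\mathcal G$.
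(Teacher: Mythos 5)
Your proposal has the same skeleton as the paper's proof (Zalcman--Pang rescaling, a ``two distinct zeros'' lemma for the rescaled monomial, then Hurwitz plus IM-sharing to force an impossible accumulation of shared points at $z_0$), but it has genuine gaps at exactly the places where the paper has to work hardest. The most serious one is the case $\alpha(z_0)=0$ with $(z_j-z_0)/\rho_j$ unbounded, which you dismiss with ``adjusting $\beta$ slightly.'' No fixed adjustment works. Writing $z_0=0$ and $\alpha(z)=z^{s}a(z)$ with $a(0)\neq 0$: if $z_j/\rho_j\to\infty$, then $Q_j(\zeta)=\rho_j^{-s}\alpha(z_j+\rho_j\zeta)=(z_j/\rho_j+\zeta)^{s}a(z_j+\rho_j\zeta)\to\infty$ locally uniformly; replacing your $\beta$ by any other fixed exponent replaces the factor $\rho_j^{-s}$ by $\rho_j^{-\tau}$, and one would then need $\rho_j^{-\tau}z_j^{s}$ to have a finite nonzero limit, which cannot be arranged by any fixed $\tau$ because the relative rate of $z_j$ and $\rho_j$ is arbitrary (try $z_j=\rho_j\log(1/\rho_j)$). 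The Zalcman--Pang lemma does not permit a $j$-dependent exponent, and taking the ``$s=0$'' exponent instead makes the limit target $L\equiv 0$, for which the two-zeros statement is simply false ($g(\zeta)=e^{a\zeta}$ is a legitimate limit and $M[g]$ is then zero-free). This is precisely the paper's Case II, Subcase I, which needs a different idea: one rescales by $z_j$ instead of $\rho_j$, forming $H_j(\zeta)=z_j^{-\eta}f_j(z_j+z_j\zeta)$, proves this auxiliary family is normal by the already-settled case $\alpha(z_0)\neq0$, and then derives the contradiction by showing the limit $H$ satisfies $H(0)=0$, whence $g_j'\to 0$ and $g'\equiv 0$, contradicting that $g$ is nonconstant. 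Nothing in your proposal plays this role.

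Two further problems. Your claim that no member of $\mathcal G$ can satisfy $M[h]\equiv\alpha$ is false as argued: members live on a disc, where a ``growth/order comparison'' places no constraint on $h=e^{H}$. Concretely, for $M[h]=h^{2}h'$ (so $n_0=2$, $n_1=1$, $k=1$) and $\alpha(z)=z$, the function $h(z)=(1+3z^{2}/2)^{1/3}$ is holomorphic and zero-free on a small disc about the origin (hence satisfies all multiplicity hypotheses vacuously) and $M[h]\equiv\alpha$ there. The step can be repaired---if $M[f_j]\equiv\alpha$ for the Zalcman sequence, then $M[g_j]\equiv Q_j$, hence $M[g]\equiv L$, and this \emph{is} impossible for the nonconstant finite-order limit $g$ on $\mathbb C$---but the contradiction must be run through the limit function, not through individual members of $\mathcal G$. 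Finally, what you call the crux and leave as an expectation is the actual technical content of the paper: its Lemma 2.1 (the rational case, proved by a careful bookkeeping of the degrees of the polynomial factors in $\Psi^{(p)}$ and $\Psi^{(p+1)}$ under the thresholds $k+p$ and $p+1$) and Lemma 2.2 (the transcendental case, proved via the second fundamental theorem for three small functions, counting-function estimates, and an inequality relating $T(r,f)$ to $T(r,\Psi)$). Since you prove neither, the hardest part of the theorem is missing from the proposal.
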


In \cite[Theorem 4, p.323]{fang-1}, for any $f\in \mathcal{G}$, $P \circ f(z)\neq\alpha(z)$ for every $z\in D$, then it is natural to investigate the case when $P \circ f(z)- \alpha(z)$ has zeros. In this direction, we have the following results:

 \begin{theorem}\label{theorem3} If for every $f\in \mathcal{F}$, $P_w \circ M[f](z)-\alpha(z)$ has at most one zero, then $\mathcal{F}$ is normal in $D$.
\end{theorem}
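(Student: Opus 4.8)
The plan is to argue by contradiction through the Zalcman--Pang rescaling method, reducing the problem to a value-distribution statement for a single meromorphic function on $\mathbb{C}$. Suppose $\mathcal{F}$ is not normal at some $z_0\in D$. Put
$$\sigma:=\frac{\Gamma_M-\gamma_M}{\gamma_M}=\frac{1}{\gamma_M}\sum_{j=1}^{k}j\,n_j ;$$
since $n_0\geq 1$ and $\sum_{j\geq 1}n_j\geq 1$ one checks $0<\sigma<k$, so $\sigma$ lies in the admissible range $(-1,k)$. The Zalcman--Pang lemma, applied with exponent $\sigma$ to $\mathcal{F}$ (whose members have zeros of multiplicity $\geq k$), yields $f_\nu\in\mathcal{F}$, points $z_\nu\to z_0$ and scales $\rho_\nu\to 0^+$ for which
$$g_\nu(\zeta):=\rho_\nu^{-\sigma}f_\nu(z_\nu+\rho_\nu\zeta)\longrightarrow g(\zeta)$$
locally uniformly in the spherical metric, where $g$ is a nonconstant meromorphic function on $\mathbb{C}$ of finite order, all of whose zeros have multiplicity $\geq k$.

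Next I would transport the monomial and the composition through the rescaling. From $f_\nu^{(j)}(z_\nu+\rho_\nu\zeta)=\rho_\nu^{\sigma-j}g_\nu^{(j)}(\zeta)$ and the choice of $\sigma$ one gets $M[f_\nu](z_\nu+\rho_\nu\zeta)=\rho_\nu^{\sigma\gamma_M-\sum_j j n_j}M[g_\nu](\zeta)=M[g_\nu](\zeta)$, so $M[f_\nu](z_\nu+\rho_\nu\zeta)\to M[g](\zeta)$ locally uniformly off the poles of $g$. Since the $a_l$ and $\alpha$ are holomorphic, $a_l(z_\nu+\rho_\nu\zeta)\to a_l(z_0)$ and $\alpha(z_\nu+\rho_\nu\zeta)\to\alpha(z_0)$ on compacta, whence
$$\Phi_\nu(\zeta):=(P_w\circ M[f_\nu])(z_\nu+\rho_\nu\zeta)-\alpha(z_\nu+\rho_\nu\zeta)\longrightarrow\Phi(\zeta):=P\big(z_0,M[g](\zeta)\big)-\alpha(z_0).$$
A brief separate argument rules out $\Phi\equiv 0$: otherwise $M[g]$ is a constant root of $P(z_0,w)-\alpha(z_0)$; if that root is $0$ then $g$ reduces to a constant, and if it is nonzero then $g$ is entire, zero-free and pole-free, so $g=e^{h}$ and the logarithmic-derivative estimate $m(r,M[g])\geq\gamma_M\,m(r,g)-S(r,g)$ contradicts $m(r,M[g])=O(1)$. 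Hence $\Phi\not\equiv 0$.

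The heart of the argument is to read off an omitted value and contradict it. By hypothesis each $P_w\circ M[f]-\alpha$ has at most one zero in $D$, so each $\Phi_\nu$ has at most one zero on any fixed compact set; applying Hurwitz's theorem to $\Phi_\nu\to\Phi\not\equiv 0$ shows that $\Phi$ has at most one zero in $\mathbb{C}$. Writing $P(z_0,w)-\alpha(z_0)=c\prod_i(w-b_i)^{m_i}$ with at least two distinct roots $b_i$ (furnished by the standing hypothesis that $P(z_0,w)-\alpha(z_0)$ has at least two distinct zeros), the zeros of $\Phi$ are exactly the points where $M[g]\in\{b_i\}$; as there is at most one such point, $M[g]$ meets $\{b_i\}$ at most once in total and therefore omits at least one value $b\in\{b_1,b_2\}$. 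It remains to contradict this. If $g$ is transcendental, a Hayman-type theorem for the monomial $M[g]$ (available because $\gamma_M\geq 2$ and $\sum_{j\geq 1}n_j\geq 1$) forces $M[g]$ to assume every nonzero value infinitely often, which is incompatible with $\{b_i\}$ being met at most once; the only remaining possibility, that the single attained value is $0$, is excluded because then some nonzero $b_i$ is taken finitely often, again contradicting the Hayman-type bound. If $g$ is rational, one argues directly by counting the zeros and poles of the rational function $M[g]$, using decisively that every zero of $g$ has multiplicity $\geq k$.

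I expect this last value-distribution step, and in particular the rational case, to be the main obstacle. A generic nonconstant rational function can perfectly well omit one finite value and attain another only once --- a M\"obius map already does so --- so the contradiction cannot come from the value distribution of $M[g]$ alone; it must be forced by combining the product structure of $M[g]=g^{n_0}(g')^{n_1}\cdots(g^{(k)})^{n_k}$ with the constraint that every zero of $g$ contributes multiplicity at least $k$. Making the zero/pole count precise --- weighing the multiplicities produced by the zeros of $g$, the poles of $g$, and the behaviour at $\infty$ against the degree of $M[g]$, and showing these cannot be reconciled with attaining two distinct finite values at most once in total --- is where the real work lies, whereas the transcendental case reduces cleanly to the second main theorem and Hayman's inequality for differential monomials.
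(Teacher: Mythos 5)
Your skeleton coincides with the paper's: rescale by the Zalcman--Pang lemma with exponent $\beta=\frac{\Gamma_M}{\gamma_M}-1$ (so that $M[f_\nu](z_\nu+\rho_\nu\zeta)=M[g_\nu](\zeta)$), pass to the limit $\Phi(\zeta)=P\big(z_0,M[g](\zeta)\big)-\alpha(z_0)$, and combine Hurwitz's theorem with the ``at most one zero'' hypothesis to conclude that $\Phi$ has at most one zero in $\mathbb{C}$; the paper's proof of Theorem \ref{theorem3} does exactly this and then invokes the claim established inside the proof of Theorem \ref{theorem1} that such a limit $\Phi$ must have at least two distinct zeros. The genuine gap in your proposal is that this last claim --- the entire analytic content of the theorem --- is never proved. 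You yourself flag the rational case as ``where the real work lies'' and leave it open, and you delegate the transcendental case to an unnamed ``Hayman-type theorem'' said to be available because $\gamma_M\geq 2$. No such theorem is available at that level of generality: for monomials $g^{n_0}(g')^{n_1}\cdots(g^{(k)})^{n_k}$ with only $n_0\geq 1$, the Hayman property is not known; the results of this type require $n_0\geq 2$ (or stronger), or else precisely the restriction that all zeros of $g$ have multiplicity at least $k$ --- a hypothesis your limit function $g$ does satisfy but which your sketch never uses in the transcendental case. So as it stands the proposal does not prove the theorem.

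For comparison, here is how the paper closes exactly this gap (in the proof of Theorem \ref{theorem1}). Let $h_1,h_2$ be distinct roots of $P(z_0,w)-\alpha(z_0)$; if $\Phi$ had at most one zero, then $M[g]$ would omit one root, say $h_2$, and take $h_1$ at most once. (i) $g$ transcendental: the second fundamental theorem applied to $M[g]$ with the values $\infty,h_1,h_2$ gives $\overline{N}(r,g)=S(r,g)$; then the estimate $\gamma_M T(r,g)\leq(\Gamma_M-\gamma_M)\big[\overline{N}(r,g)+\overline{N}\big(r,\tfrac{1}{g}\big)\big]+S(r,g)$, combined with $\overline{N}\big(r,\tfrac{1}{g}\big)\leq\tfrac{1}{k}N\big(r,\tfrac{1}{g}\big)$ (this is where the multiplicity condition enters), yields $\tfrac{(k+1)\gamma_M-\Gamma_M}{k}\,T(r,g)\leq S(r,g)$, a contradiction since $(k+1)\gamma_M>\Gamma_M$. (ii) $g$ rational, not a polynomial: if $h_2\neq 0$, the paper cites \cite[Lemma 2.6]{zhang-1}, which guarantees $M[g]-h_2$ has a finite zero; if $h_2=0$, omission of $0$ forces $g(\zeta)=A(\zeta-b_1)^{-l}$, and comparing the two expressions for $(M[g])'$ --- one from the pole at $b_1$, the other from the factorization of $M[g]-h_1$ around its unique $h_1$-point --- gives $(l-1)\gamma_M+\Gamma_M-1\leq 0$, impossible since $\Gamma_M\geq n_0+2\geq 3$. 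This computation is precisely what eliminates your M\"obius-map scenario. (iii) $g$ a nonconstant polynomial: $M[g]$ is a nonconstant polynomial, hence attains both $h_1$ and $h_2$. Without carrying out (i)--(iii), or supplying a correct reference that replaces them, your argument is incomplete at its decisive step.
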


 \begin{theorem}\label{theorem4}  If for every $f\in \mathcal{F}$, $P_w \circ M[f](z)=\alpha(z)$ implies $\left|f(z)\right|\geq M$, for some $M>0$, then $\mathcal{F}$ is normal in $D$.
\end{theorem}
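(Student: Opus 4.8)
The plan is to argue by contradiction using the rescaling (Zalcman--Pang) method, with the exponent dictated by the scaling behaviour of the monomial $M[f]$. Since normality is a local property, it suffices to prove that $\mathcal{F}$ is normal at each point of $D$; so suppose, to the contrary, that $\mathcal{F}$ fails to be normal at some $z_0\in D$. As every $f\in\mathcal{F}$ has zeros of multiplicity at least $k$, I would apply the Zalcman--Pang lemma with the exponent
$$\tau:=\frac{\Gamma_M}{\gamma_M}-1=\frac{\sum_{j=1}^{k}j\,n_j}{\gamma_M},$$
noting that $n_0\geq 1$ and $\sum_{j=1}^{k}n_j\geq 1$ force $0<\tau<k$, so $\tau$ lies in the admissible range $(-1,k)$. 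This yields points $z_j\to z_0$, functions $f_j\in\mathcal{F}$ and radii $\rho_j\to 0^{+}$ such that $g_j(\zeta):=\rho_j^{-\tau}f_j(z_j+\rho_j\zeta)\to g(\zeta)$ locally uniformly in the spherical metric, where $g$ is a non-constant meromorphic function on $\mathbb{C}$ whose zeros have multiplicity at least $k$ and with $g^{\#}(\zeta)\leq g^{\#}(0)=1$.

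The choice of $\tau$ is exactly the one that makes $M$ scaling-invariant: a direct computation gives $f_j^{(m)}(z_j+\rho_j\zeta)=\rho_j^{\tau-m}g_j^{(m)}(\zeta)$, whence
$$M[f_j](z_j+\rho_j\zeta)=\rho_j^{(\tau+1)\gamma_M-\Gamma_M}\,M[g_j](\zeta)=M[g_j](\zeta),$$
the exponent vanishing by the definition of $\tau$. Since $a_l(z_j+\rho_j\zeta)\to a_l(z_0)$ and $\alpha(z_j+\rho_j\zeta)\to\alpha(z_0)$, I obtain
$$\phi_j(\zeta):=P_w\circ M[f_j](z_j+\rho_j\zeta)-\alpha(z_j+\rho_j\zeta)\longrightarrow \phi(\zeta):=P(z_0,M[g](\zeta))-\alpha(z_0)$$
locally uniformly in the spherical metric on $\mathbb{C}$.

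Next I would exploit the hypothesis of the theorem. If $\phi_j(\zeta)=0$, then $P_w\circ M[f_j]=\alpha$ at $z_j+\rho_j\zeta$, so by assumption $|f_j(z_j+\rho_j\zeta)|\geq M$, i.e. $|g_j(\zeta)|\geq M\rho_j^{-\tau}\to\infty$. Assume first $\phi\not\equiv0$. If $\phi$ had a zero at some $\zeta^{*}$ that is not a pole of $g$, then by Hurwitz's theorem $\phi_j$ would have zeros $\zeta_j\to\zeta^{*}$, and then $g_j(\zeta_j)\to g(\zeta^{*})$ would be finite while $|g_j(\zeta_j)|\to\infty$, a contradiction. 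On the other hand, at a pole $\zeta^{*}$ of $g$ the monomial $M[g]$ also has a pole (because $n_0\geq1$), so $P(z_0,M[g](\zeta^{*}))=\infty$ and $\phi(\zeta^{*})=\infty\neq0$; thus no pole of $g$ is a zero of $\phi$ either. Hence $\phi$ has no zeros at all, so $M[g]$ avoids every zero of $P(z_0,w)-\alpha(z_0)$; since this polynomial has at least two distinct zeros, $M[g]$ omits at least two distinct finite values on $\mathbb{C}$. The degenerate case $\phi\equiv0$ means $M[g]$ is constant, which also omits two distinct finite values (the sub-case $M[g]\equiv0$ being impossible, as it would force $g$ to be a non-constant polynomial of degree $<k$ carrying a zero of multiplicity $\geq k$), so this case is subsumed in the same contradiction below.

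Finally I would derive the contradiction from the value-distribution lemma for differential monomials that also underlies Theorem \ref{theorem3}: a non-constant meromorphic function $g$ on $\mathbb{C}$ whose zeros have multiplicity at least $k$ cannot make $P(z_0,M[g])-\alpha(z_0)$ zero-free; equivalently, $M[g]$ cannot omit two distinct finite values. This contradicts the conclusion of the previous paragraph and shows that $\mathcal{F}$ is normal at $z_0$, completing the proof. I expect this value-distribution lemma to be the main obstacle: a naive appeal to Picard's theorem fails because $M[g]$ omits only two \emph{finite} values, so one must instead combine Nevanlinna-theoretic estimates for the monomial $M[g]$ with the hypotheses $n_0\geq1$, $\sum_{j\geq1}n_j\geq1$ and the multiplicity condition on the zeros of $g$ in order to force the existence of the required zeros.
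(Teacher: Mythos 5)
Your proposal is correct and takes essentially the same route as the paper's own proof: the same Zalcman--Pang rescaling with exponent $\beta=\frac{\Gamma_M}{\gamma_M}-1$, the same convergence $P_w\circ M[f_j](z_j+\rho_j\zeta)-\alpha(z_j+\rho_j\zeta)\to P_w\circ M[g](\zeta)-\alpha(0)$, the same use of Hurwitz's theorem together with the hypothesis to force $|g_j(\zeta_j)|\geq M\rho_j^{-\beta}\to\infty$ at approximate zeros while $g_j(\zeta_j)$ tends to a finite value, and the same appeal to the value-distribution analysis from the proof of Theorem \ref{theorem1} guaranteeing that $P_w\circ M[g](\zeta)-\alpha(0)$ cannot be zero-free. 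The only differences are organizational: the paper first produces a zero $\zeta_0$ of the limit and derives the contradiction there, whereas you argue contrapositively (the hypothesis excludes all zeros, contradicting the lemma), and you additionally spell out the degenerate case $P_w\circ M[g]\equiv\alpha(0)$, which the paper dismisses as ``clear.''
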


Based on ideas from \cite{charak-1}, one may extend Theorem \ref{theorem1} under the weaker hypothesis of partial sharing of holomorphic function in the following way.

\begin{theorem}\label{theorem5} If for every $f\in \mathcal{F}$, there exist $g\in\mathcal{F}$ such that $P_w \circ M[f](z)$ share $\alpha(z)$ partially with $P_w \circ M[g](z)$, then $\mathcal{F}$ is normal in $D$ provided $P_w \circ M[g](z)\not\equiv \alpha(z) .$
\end{theorem}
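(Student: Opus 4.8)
The plan is to prove normality locally and argue by contradiction, following the template of Theorem~\ref{theorem1}, the only new feature being the extraction of a usable reference function from the partial--sharing hypothesis. Since normality is a local property, it suffices to show that $\mathcal{F}$ is normal at each $z_0\in D$; after a translation and scaling I may assume $z_0=0$ and $D=\mD$. Suppose $\mathcal{F}$ is not normal at $0$. First I would apply the Pang--Zalcman rescaling lemma with the exponent $\beta=(\Gamma_M-\gamma_M)/\gamma_M$, which satisfies $0<\beta<k$ because $n_0\ge1$ and $\sum_{j\ge1}n_j\ge1$: there are $f_j\in\mathcal{F}$, points $z_j\to0$ and scales $\rho_j\to0^+$ with $g_j(\zeta):=\rho_j^{-\beta}f_j(z_j+\rho_j\zeta)$ tending spherically locally uniformly to a nonconstant meromorphic $g$ on $\mC$ of finite order whose zeros have multiplicity at least $k$. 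The value of $\beta$ is forced by the requirement that the monomial be scale invariant, so that $M[g_j](\zeta)=M[f_j](z_j+\rho_j\zeta)$ and hence $M[f_j](z_j+\rho_j\,\cdot\,)\to M[g]$ away from the poles of $M[g]$.

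Next I would identify the limiting equation. As $a_l$ and $\alpha$ are holomorphic, $a_l(z_j+\rho_j\zeta)\to a_l(0)$ and $\alpha(z_j+\rho_j\zeta)\to\alpha(0)$ locally uniformly, so that
\begin{equation*}
P_w\circ M[f_j](z_j+\rho_j\zeta)-\alpha(z_j+\rho_j\zeta)\longrightarrow \Psi(\zeta):=P(0,M[g](\zeta))-\alpha(0)=\prod_{i=1}^{m}\big(M[g](\zeta)-b_i\big)^{s_i},
\end{equation*}
locally uniformly off the poles of $M[g]$, where $b_1,\dots,b_m$ are the distinct zeros of $P(0,w)-\alpha(0)$; by hypothesis $m\ge2$. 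A short lemma using $n_0\ge1$ and $\sum_{j\ge1}n_j\ge1$ shows $M[g]$ is nonconstant, whence $\Psi\not\equiv0$ and $\Psi\not\equiv\infty$.

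The main analytic input is a value--distribution estimate showing that $\Psi$ has at least two distinct zeros. Concretely, for a nonconstant meromorphic $g$ of finite order whose zeros have multiplicity at least $k$, the monomial $G:=M[g]$ cannot simultaneously omit, or attain only at one common point, two or more distinct values. I would obtain this by applying Nevanlinna's second main theorem to $G$ with the targets $b_1,\dots,b_m,\infty$, bounding $T(r,G)$ below in terms of $T(r,g)$ and controlling the ramification and pole terms through the high multiplicity of the zeros of $g$; were $\Psi$ to have at most one zero, the resulting inequality $(m-1)T(r,g)\le S(r,g)$ would force $g$ to be constant. I then fix two distinct zeros $\zeta_1\ne\zeta_2$ of $\Psi$, neither a pole of $M[g]$.

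Finally I would transfer these zeros through the partial--sharing hypothesis. By Hurwitz's theorem there are $\zeta_l^{(j)}\to\zeta_l$ with $P_w\circ M[f_j](p_l^{(j)})=\alpha(p_l^{(j)})$, where $p_l^{(j)}:=z_j+\rho_j\zeta_l^{(j)}$, the points $p_1^{(j)}\ne p_2^{(j)}$ are distinct for large $j$, and $p_1^{(j)},p_2^{(j)}\to0$. Choosing for each $j$ a partner $h_j\in\mathcal{F}$ as in the hypothesis, the inclusion $\overline{E}_{P_w\circ M[f_j]}(\alpha)\subseteq\overline{E}_{P_w\circ M[h_j]}(\alpha)$ yields $P_w\circ M[h_j](p_l^{(j)})=\alpha(p_l^{(j)})$ while $P_w\circ M[h_j]\not\equiv\alpha$. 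If some $h\in\mathcal{F}$ occurs as $h_j$ for infinitely many $j$, then this $h$ has two distinct $\alpha$--points of $P_w\circ M[h]-\alpha$ arbitrarily close to $0$, contradicting the isolation of those points guaranteed by $P_w\circ M[h]\not\equiv\alpha$; this is precisely the contradiction driving Theorem~\ref{theorem1}. The chief obstacle specific to the partial--sharing refinement is the case of pairwise distinct partners $h_j$: here one cannot invoke a single reference function, and I would instead rescale $\{h_j\}$ by the same $z_j,\rho_j$ (its members again have zeros of multiplicity at least $k$) and re-run the value--distribution argument on the partner limit, using the proviso $P_w\circ M[g]\not\equiv\alpha$ to exclude the degenerate limit. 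Reconciling this subcase with the clean argument above is the delicate point; once it is settled, $\mathcal{F}$ must be normal at $0$, and hence in $D$.
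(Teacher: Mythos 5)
You should know at the outset that the paper contains no proof of Theorem~\ref{theorem5} at all: it is stated after the remark ``Based on ideas from \cite{charak-1}, one may extend Theorem~\ref{theorem1}\dots'' and Section~2 proves only Theorems~\ref{theorem1}--\ref{theorem4}. So your proposal can only be judged on its own merits. Your template is the right adaptation of the paper's proof of Theorem~\ref{theorem1}: the rescaling exponent $\beta=\Gamma_M/\gamma_M-1$, the identification of the limit $P(0,M[g](\zeta))-\alpha(0)$, the two-zero claim, and the Hurwitz transfer all match the paper's machinery. One secondary caveat: your second-main-theorem shortcut for the two-zero claim does not work when the Zalcman limit is rational, since then $S(r,g)=O(\log r)$ is comparable to $T(r,g)$ and the inequality $(m-1)T(r,g)\le S(r,g)$ forces nothing; the paper's own proof of Theorem~\ref{theorem1} treats the transcendental, rational non-polynomial, and polynomial cases separately for exactly this reason, and you would need to do the same.

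The serious problem is the case you flag as ``the delicate point'' (pairwise distinct partners $h_j$). It is not delicate; it is unbridgeable, because under the quantifier reading you adopt (for each $f$ a partner $g$ depending on $f$) the statement is false. Take $k=1$, $M[f]=ff'$, $P(z,w)=(w-1)(w+1)$, $\alpha\equiv 0$, and $\mathcal{F}=\{f_n(z)=n^{-1/2}e^{nz}:n\in\mathbb{N}\}$ on $\mD$. All standing hypotheses hold: the $f_n$ are zero-free, and $P(z_0,w)-\alpha(z_0)=w^2-1$ has two distinct zeros for every $z_0$. Here $M[f_n](z)=e^{2nz}$, so $P_w\circ M[f_n](z)=e^{4nz}-1$, whose zero set in $\mD$ is $E_n=\{\pi i k/(2n):k\in\mathbb{Z}\}\cap\mD$. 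Since $E_n\subseteq E_{2n}$, each $f_n$ shares $\alpha$ partially with the partner $g=f_{2n}\neq f_n$, and $P_w\circ M[f_{2n}]\not\equiv\alpha$; yet $f_n^{\#}(0)=\sqrt{n}/(1+1/n)\to\infty$, so $\mathcal{F}$ is not normal at $0$ by Marty's criterion. In this family the partners of any Zalcman sequence are pairwise distinct --- precisely your Case B --- and no argument can close that case because there is no contradiction to be had. The theorem is salvageable only under the stronger reading, suggested by the proviso, that a \emph{single} $g\in\mathcal{F}$, independent of $f$, satisfies $\overline{E}_{P_w\circ M[f]}(\alpha)\subseteq\overline{E}_{P_w\circ M[g]}(\alpha)$ for all $f\in\mathcal{F}$ with $P_w\circ M[g]\not\equiv\alpha$. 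Under that reading your Case~A argument is already the complete proof: Hurwitz gives distinct points $p_1^{(j)}\neq p_2^{(j)}\to 0$ with $P_w\circ M[f_j]=\alpha$ there, these transfer to the fixed $g$, at least one of them is nonzero for each $j$, so the zeros of $P_w\circ M[g]-\alpha$ accumulate at $0$, forcing $P_w\circ M[g]\equiv\alpha$ against the proviso. You should state that reading explicitly and discard Case~B rather than attempt to repair it.
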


\section{\textbf{Proofs of Main Results}}
Besides Zalcman's lemma \cite[p.216]{zalcman-1}, the proofs of our main results rely on the following value distribution results:

\begin{lemma}\label{lemma1} Let $n_0,n_1,n_2,...,n_k,k,p$ be the non-negative integers with $n_0\geq2$, $\sum\limits_{j=1}^{k} n_j \geq 1$ and $k\geq1$. Let $\alpha(z)\not\equiv0$ be a polynomial of degree $p$ and $f$ be a non constant rational function having zeros of multiplicity at least $k+p$ and poles of multiplicity at least $p+1$. Then $f^{n_0} (f')^{n_1} \cdots(f^{(k)} )^{n_k}-\alpha(z)$ has at least two distinct zeros.
\end{lemma}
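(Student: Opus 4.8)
The plan is to argue by contradiction: assume that $M[f]-\alpha(z)$ has at most one distinct zero and derive a violation of the multiplicity hypotheses by a degree count. Write $\Phi:=M[f]$ and set $\mu:=\Gamma_M-\gamma_M=\sum_{j=1}^{k}jn_j\geq 1$. The two elementary facts I would record first are the propagation of multiplicities through the monomial: if $f$ has a zero of order $m\ (\geq k+p)$ at a point, then each $f^{(j)}$ has a zero of order $m-j$ there, so $\Phi$ has a zero of order $m\gamma_M-\mu$; and if $f$ has a pole of order $s\ (\geq p+1)$, then $f^{(j)}$ has a pole of order $s+j$, so $\Phi$ has a pole of order $s\gamma_M+\mu$. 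Since $\gamma_M\geq n_0\geq 2$, the orders $m\gamma_M-\mu$ are comfortably large (at least $2k+2p$), which is exactly the quantitative gain the hypotheses are designed to supply.

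The engine of the proof is to eliminate the moving target $\alpha$ by differentiation. Since $\deg\alpha=p$ we have $\alpha^{(p+1)}\equiv 0$; hence if $z_0$ is the only possible zero of $\Phi-\alpha$, of multiplicity $L$, then $\Phi^{(p+1)}$ vanishes at $z_0$ to order exactly $L-p-1$. The function $\Phi^{(p+1)}$ is rational, and I would count its zeros and poles on $\widehat{\mathbb C}$, where the two totals agree. Its finite poles sit only over the poles of $f$, each of order $s\gamma_M+\mu+(p+1)$; crucially, at every zero of $f$ the function $\Phi$ has a zero of order $m\gamma_M-\mu\geq p+2$, so $\Phi^{(p+1)}$ is forced to vanish there to order $m\gamma_M-\mu-(p+1)$. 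Writing $d_+$ (resp. $d_-$) for the total multiplicity of the zeros (resp. poles) of $f$, and $a$ (resp. $b$) for the number of distinct such points, the balance ``zeros $=$ poles'' for $\Phi^{(p+1)}$, combined with the balance for $\Phi-\alpha$ (which fixes $L$ in terms of $\gamma_M d_-+b\mu$ and the order at infinity), collapses after cancellation to an inequality of the shape
\begin{equation*}
\gamma_M\,d_+\ \leq\ a\mu+(a+b)(p+1),
\end{equation*}
the contributions at infinity being governed by $\deg_\infty\alpha=p$ and turning out to be harmless.

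Using $d_+\geq a(k+p)$ this reads $a\big(\gamma_M(k+p)-\mu-(p+1)\big)\leq b(p+1)$, and since $\gamma_M(k+p)-\mu\geq 2k+2p$ the bracket is $\geq p+2k-1\geq 1$. When $f$ is a polynomial, or more generally whenever its zeros are not heavily outnumbered by its poles, this is an immediate contradiction; in the polynomial case $L=\deg\Phi=\gamma_M d_+-\mu$ and the same count sharpens to $\gamma_M(k+p)\leq\mu+p+1$, i.e.\ $p+2k\leq 1$, which is absurd for $k\geq 1$. I expect this to dispose of the polynomial case and of every $f$ whose poles do not dominate.

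The main obstacle is the complementary regime in which $f$ is pole-dominated (for instance $a=0$, or $L=0$), where the displayed inequality carries no information because there are no forced zeros to count. Here I would instead exploit the rigid structure forced by the hypothesis directly: clearing the finitely many poles shows
\begin{equation*}
\Phi(z)=\alpha(z)+\frac{c\,(z-z_0)^{L}}{\prod_i (z-\gamma_i)^{\sigma_i}},\qquad \sigma_i=s_i\gamma_M+\mu,
\end{equation*}
so that $\Phi\cdot\prod_i(z-\gamma_i)^{\sigma_i}-\alpha\cdot\prod_i(z-\gamma_i)^{\sigma_i}=c(z-z_0)^{L}$ is a single-power polynomial; differentiating this identity and factoring (as in the model case $f=z^{-s}$, where the numerator becomes $C-\mathrm{const}\cdot z^{N}$ with $N$ distinct roots) produces more than one zero, contradicting the assumption. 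Packaging the zero-dominated count and this pole-dominated analysis into a single argument, while carefully tracking the behaviour at $\infty$ and the harmless coincidence $z_0\in\{\beta_i\}$, is the part I expect to require the most care.
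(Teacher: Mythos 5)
Your overall strategy (argue by contradiction, kill $\alpha$ by differentiating $p+1$ times, then count zeros against poles) is exactly the strategy of the paper's proof, and your first inequality is sound: a careful zero--pole balance for $\Phi^{(p+1)}$ and for $\Phi-\alpha$ on $\widehat{\mathbb C}$, with the behaviour at infinity tracked as you indicate, does yield $\gamma_M d_+\leq a\mu+(a+b)(p+1)$, and this disposes of the polynomial case and of the regime where zeros dominate. In the paper's notation ($d_+=M$, $d_-=N$, $a=s$, $b=t$) this is essentially what the paper extracts in its Case 2.2.1.

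The genuine gap is precisely where you say you expect difficulty: the pole-dominated regime. Your proposed fix --- write $\Phi\prod_i(z-\gamma_i)^{\sigma_i}-\alpha\prod_i(z-\gamma_i)^{\sigma_i}=c(z-z_0)^{L}$ and assert that ``differentiating this identity and factoring produces more than one zero'' --- is not an argument; the model case $f=z^{-s}$ with constant $\alpha$ does not generalize by this device, since with several poles and nonconstant $\alpha$ nothing forces the differentiated numerator to split into distinct roots. What is actually needed, and what the paper supplies, is a \emph{localization} of the zeros of $\Phi^{(p+1)}$ that the global balance cannot give: differentiating the factored representation of $\Phi$ yields
\begin{equation*}
\Phi^{(p+1)}(z)=A^{\gamma_M}\,\frac{\prod_{i=1}^{s}(z-a_i)^{(m_i+1)\gamma_M-\Gamma_M-p-1}}{\prod_{j=1}^{t}(z-b_j)^{(l_j-1)\gamma_M+\Gamma_M+p+1}}\,g_2(z),
\qquad \deg g_2\leq (s+t-1)(\Gamma_M-\gamma_M+p+1),
\end{equation*}
and one must also prove (not merely ``track'') that $z_0\neq a_i$ for all $i$, which the paper gets by comparing the two expressions for $\Phi^{(p)}$: from the one-zero representation $\Phi^{(p)}(z_0)=B\neq0$, while $\Phi^{(p)}$ vanishes at every $a_i$. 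Only then can one say that all $L-p-1$ zeros of $\Phi^{(p+1)}$ at $z_0$ are zeros of $g_2$, hence $L-p-1\leq (s+t-1)(\Gamma_M-\gamma_M+p+1)$. Combined with the fact that in the cancellation case at infinity $L$ is forced to be as large as $(N-t)\gamma_M+t\Gamma_M+p$, this bounded degree of $g_2$ is what produces the contradiction $N<N$ when $M\leq N$. Your zero--pole balance cannot substitute for this step: in the pole-dominated regime the balance is perfectly consistent with $\Phi^{(p+1)}$ having an enormous zero at $z_0$ (indeed it requires many zeros somewhere), so no contradiction can come from counting alone; the contradiction comes from the structural fact that zeros of $\Phi^{(p+1)}$ away from the zeros of $f$ are confined to a polynomial factor of controlled degree. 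Until that ingredient is in place, the hard half of the lemma remains unproved.
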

\begin{proof}Let $\Psi:=f^{n_0} (f')^{n_1} \cdots(f^{(k)} )^{n_k}$. Suppose on contrary that $\Psi(z)-\alpha(z)$ has at most one zero. We distinguish the following  cases:\\
\textbf{Case I}: When $f$ is a non constant polynomial. Since $f$ has only zeros of multiplicity at least $k+p$ and $\alpha(z)$ is a polynomial of degree $p$, we can see that $\Psi(z)-\alpha(z)$ has at least one zero. We set 
\begin{equation}\Psi(z)-\alpha(z)=a(z-z_0)^m,\label{49}
\end{equation}
where $a$ is a non zero constant and $m>p+1$ is a positive integer. Then
 $$\Psi^{(p+1)}(z)=am(m-1)(m-2)\cdots(m-p)(z-z_0)^{m-p-1}$$
implies that $z_0$ is the only zero of $\Psi^{(p+1)} (z)$. Since $f$ is a non constant polynomial, it follows that $z_0$ is a zero of $f(z)$ and hence a zero of $\Psi(z)$ of multiplicity at least $\gamma_\Psi(k+p+1)-\Gamma_\Psi$. Thus $\Psi^{(p)}(z_0)=0$. But, from (\ref{49}), we have $\Psi^{(p)}(z_0)=\alpha^{(p)}(z_0)\neq0$, a contradiction. \\

\textbf{Case II}: When $f$ is a rational but not a polynomial. We set 
\begin{align} f(z)=A\frac{\prod\limits_{i=1}^{s}{(z-{a_{i}})^{m_i}}}{\prod\limits_{j=1}^{t}{(z-{b_{j}})^{l_j}}},\label{55}
\end{align}
where $A$ is a non-zero constant, $m_i\geq k+p(i=1,2,...,s)$ and $l_j\geq p+1 (j=1,2,...,t)$.\\
Put $$M=\sum\limits_{i=1}^{s} m_i$$ and $$N=\sum\limits_{j=1}^{t}l_j.$$
Then $M\geq (k+p)s$ and $N\geq (p+1)t$.\\
Now,
\begin{align} \Psi(z)=A^{\gamma_\Psi} \frac{\prod\limits_{i=1}^{s}{(z-{a_{i}})^{(m_i+1)\gamma_\Psi-\Gamma_\Psi}}}{\prod\limits_{j=1}^{t}{(z-{b_{j}})^{(l_j-1)\gamma_\Psi+\Gamma_\Psi}}}g_0(z),\label{56}
\end{align}
where $g_0(z)$ is a polynomial such that deg$(g_0(z))\leq(s+t-1)(\Gamma_\Psi-\gamma_\Psi)$.\\
On differentiating (\ref{56}), we have
\begin{align} 
\Psi^{(p)}(z)&=A^{\gamma_\Psi} \frac{\prod\limits_{i=1}^{s}{(z-{a_{i}})^{(m_i+1)\gamma_\Psi-\Gamma_\Psi-p}}}{\prod\limits_{j=1}^{t}{(z-{b_{j}})^{(l_j-1)\gamma_\Psi+\Gamma_\Psi+p}}}g_1(z),\label{57} \\
\Psi^{(p+1)}(z)&=A^{\gamma_\Psi} \frac{\prod\limits_{i=1}^{s}{(z-{a_{i}})^{(m_i+1)\gamma_\Psi-\Gamma_\Psi-p-1}}}{\prod\limits_{j=1}^{t}{(z-{b_{j}})^{(l_j-1)\gamma_\Psi+\Gamma_\Psi+p+1}}}g_2(z),\label{58}
\end{align}
where $g_1(z)$ and $g_2(z)$ are the polynomials such that deg$(g_1(z))\leq (s+t-1)(\Gamma_\Psi-\gamma_\Psi+p)$ and deg$(g_2(z))\leq (s+t-1)(\Gamma_\Psi-\gamma_\Psi+p+1)$.\\
\textbf{Case 2.1} Suppose that $\Psi(z)-\alpha(z)$ has exactly one zero say $z_0$. We set
\begin{align} \Psi(z)=\alpha(z)+\frac{(z-z_0)^l}{\prod\limits_{j=1}^{t}{(z-{b_{j}})^{(l_j-1)\gamma_\Psi+\Gamma_\Psi}}},\label{59}
\end{align}
where $l$ is a positive integer.\\
On differentiating (\ref{59}), we have
\begin{align}
\Psi^{(p)}(z)&= B + \frac{(z-z_0)^{l-p}}{\prod\limits_{j=1}^{t}{(z-{b_{j}})^{(l_j-1)\gamma_\Psi+\Gamma_\Psi+p}}}g_3(z),\label{60} \\
\Psi^{(p+1)}(z)&=\frac{(z-z_0)^{l-p-1}}{\prod\limits_{j=1}^{t}{(z-{b_{j}})^{(l_j-1)\gamma_\Psi+\Gamma_\Psi+p+1}}}g_4(z),\label{61}
\end{align}
where $B$ is a non zero constant and $g_3(z)$, $g_4(z)$ are the polynomials such that deg$(g_3(z))\leq pt$ and deg$(g_4(z))\leq (p+1)t$.
On comparing (\ref{57}) and (\ref{60}), we conclude that $z_0\neq a_i(i=1,2,...,s)$.\\
\textbf{Case 2.2.1} $l\neq (N-t)\gamma_\Psi + t\Gamma_\Psi+p$\\
From (\ref{56}) and (\ref{59}), we deduce
\begin{align*} (N-t)\gamma_\Psi+t\Gamma_\Psi &\leq (M+s)\gamma_\Psi-s\Gamma_\Psi +\text{deg}(g_0(z)) \\
                                             &\leq (M+s)\gamma_\Psi-s\Gamma_\Psi +(s+t-1)(\Gamma_\Psi-\gamma_\Psi)\\
                                     \Rightarrow N&<M.
																		\end{align*}
Also, from (\ref{58}) and (\ref{61}), we have
\begin{align*} (&M+s)\gamma_\Psi -s\Gamma_\Psi -(p+1)s\leq\text{deg} (g_4(z))\leq (p+1)t\leq N \\
      \Rightarrow &M\gamma_\Psi \leq s(\Gamma_\Psi -\gamma_\Psi+p+1)+N \\
			\Rightarrow &M<\left( \frac{\Gamma_\Psi -\gamma_\Psi+p+1}{(k+p)\gamma_\Psi}+\frac{1}{\gamma_\Psi}\right)M \\
			\Rightarrow &M<M,
			\end{align*}
which is absurd.\\
\textbf{Case 2.2.2} $l= (N-t)\gamma_\Psi + t\Gamma_\Psi+p$\\
When $M>N$, then by proceeding similar way as in case 2.2.1, we get a contradiction.\\
When $M\leq N$, then from (\ref{58}) and (\ref{61}), we have  
\begin{align*} &l-p-1\leq\text{deg} (g_2(z))\leq (s+t-1)(\Gamma_\Psi -\gamma_\Psi+p+1)\\
          \Rightarrow &l\leq (s+t-1)(\Gamma_\Psi -\gamma_\Psi+p+1)+p+1 \\
					\Rightarrow &(N-t)\gamma_\Psi + t\Gamma_\Psi+p \leq (s+t-1)(\Gamma_\Psi -\gamma_\Psi+p+1)+p+1 \\
					\Rightarrow &N< M\frac{\Gamma_\Psi -\gamma_\Psi+p+1}{(k+p)\gamma_\Psi}+ N\frac{1}{\gamma_\Psi} \\
          \Rightarrow &N< \left( \frac{\Gamma_\Psi -\gamma_\Psi+p+1}{(k+p)\gamma_\Psi}+\frac{1}{\gamma_\Psi}\right)N\\
					\Rightarrow &N<N,
\end{align*}
which is absurd.\\
\textbf{Case 2.2} Suppose that $\Psi(z)-\alpha(z)$ has no zero, then by proceeding the same manner as in case 2.1, we get a contradiction. Hence the Lemma follows.
\end{proof}

\begin{lemma}\label{lemma2} Let $n_0,n_1,n_2,...,n_k,k$ be the non negative integers with $n_0\geq2$, $\sum\limits_{j=1}^{k}n_j\geq 1$ and $k\geq1$. Let $f$ be a transcendental meromorphic function such that $f$ has only zeros of multiplicity at least $k+1$. Then $f^{n_0} (f')^{n_1} \cdots(f^{(k)} )^{n_k}-a(z)$ has infinitely many zeros for any small function $a(z)(\not\equiv0,\infty)$ of $f$.
\end{lemma}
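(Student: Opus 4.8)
The plan is to argue by contradiction through Nevanlinna theory, the transcendental analogue of the degree count used for Lemma~\ref{lemma1}. Write $\Psi:=f^{n_0}(f')^{n_1}\cdots(f^{(k)})^{n_k}$ and suppose that $\Psi-a$ has only finitely many zeros, so that $\overline N\!\left(r,1/(\Psi-a)\right)=S(r,f)$. Since $\Psi$ is a differential monomial in the transcendental function $f$ with $n_0\geq 2$, one first records the routine facts $T(r,\Psi)\asymp T(r,f)$ (hence $S(r,\Psi)=S(r,f)$ and $T(r,\Psi)/\log r\to\infty$), together with two exact local contributions: at a pole of $f$ of order $q$ the function $\Psi$ has a pole of order $q\gamma_\Psi+(\Gamma_\Psi-\gamma_\Psi)$, while at a zero of $f$ of order $m\geq k+1$ every factor $f^{(j)}$ ($0\leq j\leq k$) still vanishes, so $\Psi$ has a zero of order $m\gamma_\Psi-(\Gamma_\Psi-\gamma_\Psi)\geq\gamma_\Psi$. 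The hypotheses $n_0\geq 2$ and ``all zeros of multiplicity at least $k+1$'' are exactly what force these orders to be large, and this is where they enter the estimate.

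The engine is the Second Main Theorem applied to $\Psi$ with the three small targets $0$, $a$ and $\infty$,
$$T(r,\Psi)\leq \overline N(r,\Psi)+\overline N\!\left(r,\tfrac{1}{\Psi}\right)+\overline N\!\left(r,\tfrac{1}{\Psi-a}\right)+S(r,f),$$
in which the last term is $S(r,f)$ by the contradiction hypothesis. Everything then reduces to bounding the reduced pole- and zero-counting functions of $\Psi$ by a proper fraction of $T(r,\Psi)$. For the poles, $\overline N(r,\Psi)=\overline N(r,f)$ whereas $N(r,\Psi)\geq\gamma_\Psi N(r,f)\geq 2\,\overline N(r,f)$ because $\gamma_\Psi\geq n_0\geq 2$; hence $\overline N(r,\Psi)\leq\tfrac12 N(r,\Psi)\leq\tfrac12 T(r,\Psi)$. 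For the zeros I would split $\overline N(r,1/\Psi)=\overline N_1+\overline N_2$ according to whether a zero of $\Psi$ comes from a zero of $f$ or only from the derivatives $f^{(j)}$ ($1\leq j\leq k$) at a point where $f\neq 0$. By the previous paragraph each zero of $f$ yields a zero of $\Psi$ of order at least $\gamma_\Psi$, so $\overline N_1\leq\tfrac{1}{\gamma_\Psi}N(r,1/\Psi)\leq\tfrac12 T(r,\Psi)$, and in fact the multiplicity bound $\overline N(r,1/f)\leq\tfrac{1}{k+1}N(r,1/f)$ gives considerable room to spare.

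The main obstacle is the term $\overline N_2$: a zero of $\Psi$ produced by a vanishing derivative $f^{(j)}$ away from the zeros of $f$ may be a simple zero of $\Psi$, so no saving is available from multiplicity, and such zeros can genuinely be as numerous as $T(r,f)$ (already for $f$ a high power of a transcendental function). Controlling them is precisely what the hypothesis forces: since every zero of $f$ has multiplicity at least $k+1$, the derivatives contribute no zeros at the zero set of $f$, and their stray zeros are estimated by the standard consequences of the lemma on the logarithmic derivative and the Second Main Theorem, in a ramification-sensitive (multiplicity-weighted) form of the inequality above rather than the crude reduced version, so that the accumulated coefficient of $T(r,\Psi)$ on the right stays strictly below $1$. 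Assembling the three estimates then yields $T(r,\Psi)\leq(1-\delta)\,T(r,\Psi)+S(r,f)$ for some $\delta>0$; as $f$, and hence $\Psi$, is transcendental we have $S(r,f)=o(T(r,\Psi))$ outside an exceptional set, which is the desired contradiction. I expect the bookkeeping in the $\overline N_2$ estimate---confirming that the constants remain admissible for every choice of the exponents $n_j$ subject to $n_0\geq 2$ and $\sum_{j\geq 1}n_j\geq 1$---to be the only real difficulty, the remainder being the standard contradiction scheme.
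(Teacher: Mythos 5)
Your proposal follows the same overall scheme as the paper --- assume $\Psi:=f^{n_0}(f')^{n_1}\cdots(f^{(k)})^{n_k}$ and that $\Psi-a$ has finitely many zeros, then apply the second main theorem for three small functions to $\Psi$ --- but it has a genuine gap exactly at the step you yourself identify as the crux: the bound on $\overline N_2$, the reduced counting function of zeros of $\Psi$ arising from zeros of the derivatives $f^{(j)}$ at points where $f\neq 0$. You assert that these are controlled by ``standard consequences of the lemma on the logarithmic derivative \ldots\ so that the accumulated coefficient of $T(r,\Psi)$ stays strictly below $1$,'' but no such estimate is stated or proved, and this estimate is precisely the mathematical content of the lemma. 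The paper does not derive it from scratch either: it imports a specific value distribution inequality (Charak--Sharma, Theorem 1.1, inequality (2.2)), namely
$$\overline N\left(r,\frac{1}{\Psi}\right)\leq (1+q)\,\overline N\left(r,\frac{1}{f}\right)+\sum_{j} j\,\overline N(r,f)+S(r,\Psi),$$
where $q$ is the number of derivative factors $f^{(j)}$ ($1\le j\le k$, $n_j\ge 1$) occurring in $\Psi$ and the sum runs over those same indices. Combined with the multiplicity hypothesis (a zero of $f$ of order $\ge k+1$ is a zero of $\Psi$ of order $\ge kn_0+\gamma_\Psi$, whence $\overline N(r,1/f)\leq \bigl(N(r,1/\Psi)-\overline N(r,1/\Psi)\bigr)/(kn_0+\gamma_\Psi-1)$) and the pole estimate $\overline N(r,f)\leq N(r,\Psi)/\Gamma_\Psi$, this is what makes the total coefficient of $T(r,\Psi)$ on the right-hand side strictly less than $1$. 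Without this (or an equivalent) inequality your argument does not close.

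Moreover, the constants you actually write down cannot assemble into $T(r,\Psi)\leq(1-\delta)T(r,\Psi)+S(r,f)$: you bound $\overline N(r,\Psi)\leq\tfrac12 T(r,\Psi)$ and $\overline N_1\leq\tfrac12 T(r,\Psi)$, so these two terms alone already exhaust the budget $T(r,\Psi)$, leaving no room whatsoever for $\overline N_2$. One needs the sharper constants above ($1/\Gamma_\Psi\le 1/4$ and $1/(kn_0+\gamma_\Psi-1)\le 1/4$ under the hypotheses $n_0\ge2$, $k\ge1$, $\sum_{j\ge1}n_j\ge1$), not $1/2$. Two smaller points: (i) the relation $T(r,f)\le C\,T(r,\Psi)+S(r,\Psi)$ with $S(r,f)=S(r,\Psi)$, which you need at the very end and dismiss as a ``routine fact'' ($T(r,\Psi)\asymp T(r,f)$), is not routine in that direction --- the paper cites Singh's theorem on homogeneous differential polynomials for it; (ii) your sentence ``the derivatives contribute no zeros at the zero set of $f$'' contradicts your own earlier (correct) observation that every factor $f^{(j)}$, $0\le j\le k$, vanishes at a zero of $f$ of order $m\ge k+1$ --- what is true is that such zeros are already counted in $\overline N_1$.
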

\begin{proof} Let $\Psi:=f^{n_0} (f')^{n_1} \cdots(f^{(k)} )^{n_k}$. Suppose on contrary that $\Psi(z)-a(z)$ has only finitely many zeros. Then by second fundamental theorem of Nevanlinna for three small functions, we have
\begin{align} T(r,\Psi)&\leq \overline N(r,\Psi) + \overline N\left(r,\frac{1}{\Psi}\right) + \overline N\left(r,\frac{1}{\Psi - a(z)}\right)\notag\\
&\leq \overline N(r,f) + \overline N\left( r, \frac{1}{\Psi}\right) + S(r,\Psi).\label{50}
\end{align}
Also, from \cite[Theorem 1.1, see (2.2)]{charak-1}, we have
\begin{align} \overline N\left( r, \frac{1}{\Psi}\right)\leq \left( 1+\sum\limits_{j=1}^{k}\right) \overline N\left(r,\frac{1}{f}\right) + \sum\limits_{j=1}^{k} j \overline N(r,f) + S(r,\Psi),\label{51}
\end{align}
where the summation runs over the terms $f^{(j)}(1\leq j\leq k)$ in the monomial $\Psi$ of $f$.\\
Since the zeros of $f$ has multiplicity at least $k+1$, we have
\begin{align} \overline N \left(r,\frac{1}{f}\right)&=\overline N_{(k+1}\left(r,\frac{1}{f}\right) \notag \\
                         & \leq \frac{1}{kn_0+\gamma-1} \left(N\left( r,\frac{1}{\Psi}\right)-\overline N\left(r,\frac{1}{\Psi}\right)\right). \label{52}\end{align}

Also, one can see that 
\begin{align} \overline N(r,f)\leq \frac{1}{\Gamma_\Psi} N(r,\Psi).\label{53}
\end{align}
On combining (\ref{50}), (\ref{51}), (\ref{52}) and (\ref{53}), we get 
\begin{align*} T(r,\Psi) \leq \frac{1+\sum\limits_{j=1}^{k} j}{kn_0+\gamma+\sum\limits_{j=1}^{k} j}N\left(r,\frac{1}{\Psi}\right) &+\left( 1+ \frac{\sum\limits_{j=1}^{k} j (kn_0+\gamma-1)}{kn_0+\gamma+\sum\limits_{j=1}^{k} j}\right) \overline N(r,\Psi) + S(r,\Psi)\\
\Rightarrow \left( 1-\frac{1}{\Gamma_{\Psi}} - \frac{1+\sum\limits_{j=1}^{k} j (kn_0+\gamma-1)}{(kn_0+\gamma+\sum\limits_{j=1}^{k} j)\Gamma_{\Psi}}\right)&T(r,\Psi)\leq S(r,\Psi) \\
\Rightarrow &T(r,\Psi)\leq S(r,\Psi).
\end{align*}

Thus, by using the inequality \cite{singh-1}
$$T(r,f)+S(r,f)\leq CT(r,\Psi)+S(r,\Psi),$$
where $C$ is a constant and $S(r,f)=S(r,\Psi)$, we get a contradiction. Hence the Lemma follows.
\end{proof}

Since normality is local property, we shall always assume that $D=\mD$, the open unit disk and hence the point at which the family is assumed to be not normal is the origin.
		
\begin{proof} [\textbf{Proof of Theorem 1.2.}]
Suppose that $h_1$ and $h_2$ are two distinct zeros of $P(z_0,w)-\alpha(z_0)$ for any $z_0\in D$. Suppose on contrary that $\mathcal{F}$ is not normal at the origin. Then by Zalcman's lemma \cite[p.216]{zalcman-1}, we can find a sequence $\left\{ f_j \right\}$ in $\mathcal{F}$, a sequence $\left\{ z_j\right\}$ of complex numbers with $z_j\rightarrow 0$ and a sequence $\left\{\rho_j\right\}$ of positive real numbers with $\rho_j \rightarrow 0$ such that
$$g_j (\zeta) =\rho_j ^{-\beta} f_j (z_j +\rho_j \zeta)$$
converges locally uniformly with respect to the spherical metric to a non constant meromorphic function $g(\zeta)$ on $\mC$ having bounded spherical derivative.\\
We take $\beta=\frac{\Gamma_M}{\gamma_M} -1.$\\
Hence, 
$$P_w \circ M[f_j](z_j+\rho_j\zeta)-\alpha(z_j+\rho_j\zeta)\rightarrow P_w \circ M[g](\zeta)-\alpha(0).$$
We claim that $P_w \circ M[g](\zeta)-\alpha(0)$ has at least two distinct zeros. We distinguish the following cases:\\
\textbf{Case I}: Suppose that $P_w \circ M[g](\zeta)-\alpha(0)$ has exactly one zero say $a$. That is,
$P_w \circ M[g](a)=\alpha(0)$ and
$P_w \circ M[g](\zeta)\neq\alpha(0)$ for $\zeta\neq a$. Without loss of generality we may assume that $ M[g](a)=h_1$. Then $M[g](\zeta)\neq h_2$ and $M[g](\zeta)\neq h_1$ for $\zeta\neq a$. If $g(\zeta)$ is a transcendental meromorphic function, then we have 
\begin{align}
\Gamma_M \overline{N}(r,g)&\leq N(r,M[g])\notag\\
&\leq T(r,M[g])\notag\\
&\leq \overline{N}(r,M[g])+\overline{N}\left(r,\frac{1}{M[g]-h_1}\right)+\overline{N}\left(r,\frac{1}{M[g]-h_2}\right)+S(r,M[g])\notag\\
&\leq \overline{N}(r,g) +S(r,g)\notag\\
\Rightarrow (\Gamma_M -1)\overline{N}(r,g)&\leq S(r,g)\notag\\
\Rightarrow \overline{N}(r,g)&=S(r,g).\label{2}
\end{align}
Since the zeros of $g$ has multiplicity at least $k$, we have
\begin{equation}\overline N\left(r,\frac{1}{g}\right)\leq\frac{1}{k}N\left(r,\frac{1}{g}\right) \label{3}
\end{equation}
Now, 
\begin{align*}
\gamma_M T(r,g)&=T(r,g^{\gamma_M})\\
&=T\left(r,\frac{1}{g^{\gamma_M}}\right)+O(1)\\
&\leq T\left(r,\frac{M[g]}{g^{\gamma_M}}\right)+T\left(r,\frac{1}{M[g]})\right)+S(r,g)\\
&\leq T\left(r,\frac{g^{n_0} (g')^{n_1}\cdots (g^{(k)} )^{n_k}}{g^{n_0} g^{n_1}\cdots g^{n_k}}\right)+S(r,g)\\
&=m\left(r,\frac{(g')^{n_1}\cdots (g^{(k)} )^{n_k}}{ g^{n_1}\cdots g^{n_k}}\right)+N\left(r,\frac{(g')^{n_1}\cdots (g^{(k)} )^{n_k}}{ g^{n_1}\cdots g^{n_k}}\right)+S(r,g)\\
&\leq\sum\limits_{i=1}^{k}{\left[ m\left(r,\left(\frac{g^{(i)}}{g}\right)^{n_i}\right) + n_i N\left(r,\left(\frac{g^{(i)}}{g}\right)\right) \right]}+S(r,g)\\
&\leq\sum\limits_{i=1}^{k}{i{{n}_{i}}\left[ \overline{N}(r,g)+\overline{N}\left(r,\frac{1}{g}\right) \right]}+S(r,g).
\end{align*}
That is,
\begin{equation}\gamma_M T(r,g)\leq(\Gamma_M -\gamma_M)\left[ \overline{N}(r,g)+\overline{N}\left(r,\frac{1}{g}\right) \right]+S(r,g).\label{4}
\end{equation}
Substituting (\ref{2}) and (\ref{3}) in (\ref{4}), we get
\begin{align*}\gamma_M T(r,g)&\leq\frac{\Gamma_M -\gamma_M}{k} N\left(r,\frac{1}{g}\right) +S(r,g)\\
                             &\leq\frac{\Gamma_M -\gamma_M}{k} T\left(r,\frac{1}{g}\right) +S(r,g) \\
								            &\leq\frac{\Gamma_M -\gamma_M}{k} T(r,g) +S(r,g) \\
               \Rightarrow\frac{(k+1)\gamma_M -\Gamma_M}{k}T(r,g)&\leq S(r,g) \\
							 \Rightarrow T(r,g)&=S(r,g),
\end{align*}
which is a contradiction. If $g(\zeta)$ is a non polynomial rational function, then we consider the following subcases:\\
\textbf{Subcase I}: $h_2 \neq0$\\
 For a non-zero $h_2$, $M[g](\zeta)-h_2$ has at least one finite zero, see \cite[Lemma 2.6, p.6]{zhang-1}. This is a contradiction to the fact that $M[g](\zeta)\neq h_2$.\\
\textbf{Subcase II}: $h_2 =0$ \\
Since $M[g](\zeta)\neq h_2=0$, so we deduce
                 \begin{equation} g(\zeta)=\frac{A}{(\zeta-b_1)^{l}},\end{equation}
								where $A\neq0$ is a constant and $l\geq1$.\\
Now, 
\begin{align*}
M[g](\zeta)=\frac{A^{\gamma_M}}{(\zeta-b_1)^{(l -1)\gamma_M +\Gamma_M}}.
\end{align*}
So,
\begin{align}\label{5}
(M[g](\zeta))'=\frac{B}{(\zeta-b_1)^{(l -1)\gamma_M +\Gamma_M+1}}.
\end{align}
Since, $M[g](a)=h_1$, we have
\begin{align*} 
M[g](\zeta)-h_1 &=\frac{A^{\gamma_M}}{(\zeta-b_1)^{(l -1)\gamma_M +\Gamma_M}}-h_1\\
&=C\frac{(\zeta -a)^{(l -1)\gamma_M +\Gamma_M}}{(\zeta-b_1)^{(l -1)\gamma_M +\Gamma_M}},
\end{align*}
where $C\neq0$ is a constant.\\
Therefore, 
\begin{align}\label{6}
(M[g](\zeta))'&=(M[g](\zeta)-h_1)'\\
&=\frac{(\zeta -a)^{(l -1)\gamma_M +\Gamma_M-1}}{(\zeta-b_1)^{(l -1)\gamma_M +\Gamma_M+1}}R(\zeta),\notag
\end{align}
where $R(\zeta)$ is a polynomial.
On comparing (\ref{5}) and (\ref{6}), we have
\begin{align*}
(l -1)\gamma_M + \Gamma_M -1 + \text{deg}(R(\zeta))=0\\
\Rightarrow (l -1)\gamma_M + \Gamma_M -1 \leq 0 \\
\Rightarrow (k-1)\gamma_M + \Gamma_M -1 \leq 0,
\end{align*}
which is a contradiction.\\
 If $g(\zeta )$ is a non constant polynomial, then the polynomial $M[g](\zeta)$ cannot avoid $h_1$ and $h_2$ and so $P_w \circ M[g](\zeta)-\alpha(0)$ has at least two distinct zeros, which is a contradiction.\\
\textbf{Case II}: Suppose that $P_w \circ M[g](\zeta)-\alpha(0)$ has no zero. Then $M[g](\zeta)\neq h_1,h_2$. By proceeding the same way as in case I, we get a contradiction.\\
Thus the claim hold and hence there exist $\zeta_0$ and $\tilde{\zeta} _0$ such that 
$$P_w \circ M[g](\zeta_0)-\alpha(0)=0$$ and
$$P_w \circ M[g](\tilde{\zeta}_0)-\alpha(0)=0.$$
Consider $N_1 =\left\{\zeta\in\mathbb{C} : |\zeta-\zeta_0|<\delta\right\}$ and $N_2 =\left\{\zeta\in\mathbb{C} : |\zeta-\tilde{\zeta} _0|<\delta\right\}$, where $\delta(>0)$ is small enough number such that $N_1 \cap N_2 =\phi$ and $P_w \circ M[g](\zeta)-\alpha(0)$ has no other zeros in $N_1 \cup N_2$ except for $\zeta_0$ and $\tilde{\zeta} _0$. By Hurwitz theorem, there exist points $\zeta_j \rightarrow \zeta_0$ and $\tilde{\zeta}_j \rightarrow \tilde{\zeta}_0$ such that, for sufficiently large $j$, we have
$$P_w \circ M[f_j](z_j+\rho_j\zeta_j)-\alpha(z_j+\rho_j\zeta_j)=0,$$ and
$$P_w \circ M[f_j](z_j+\rho_j\tilde{\zeta}_j)-\alpha(z_j+\rho_j\tilde{\zeta}_j)=0.$$
Since, $P_w \circ M[f](z)$ and $P_w \circ M[g](z)$ share $\alpha(z)$ IM in $D$, we see for any integer $i$
$$P_w \circ M[f_i](z_j+\rho_j\zeta_j)-\alpha(z_j+\rho_j\zeta_j)=0,$$ and
$$P_w \circ M[f_i](z_j+\rho_j\tilde{\zeta}_j)-\alpha(z_j+\rho_j\tilde{\zeta}_j)=0.$$
For a fixed $i$, taking $j\rightarrow \infty$, we have 
$$P_w \circ M[f_i](0)-\alpha(0)=0.$$
Since, the zeros of $P_w \circ M[f_i](\zeta)-\alpha(\zeta)$ have no accumulation point except for finitely many $f_i$, so for sufficiently large $j$, we have
\begin{align*}&z_j+\rho_j \zeta_j=0,~~~~~~~~~z_j+\rho_j \tilde{\zeta}_j=0\\
&\text{or}~~~~~~~~\zeta_j=\frac{z_j}{\rho_j},~~~~~~~~~\tilde{\zeta}_j=\frac{z_j}{\rho_j}
\end{align*}
which is a contradiction to the fact that $\zeta_j\in N_1$ , $\tilde{\zeta}_j\in N_2$ and $N_1 \cap N_2 =\phi$. Hence $\mathcal{F}$ is normal in $D$.
\end{proof}

\begin{proof}[\textbf{Proof of Corollary 1.5.}]
Let $S=\left\{a_l(z):l=1,2,\cdots,n\right\}$ be the set of holomorphic functions on a domain $D$ with $n\geq2$. Then consider $P(z,w):=(w-a_1(z))(w-a_2(z))\cdots(w-a_n(z))$  and $\alpha(z)\equiv 0$. Clearly, for every $f_i,f_j\in \mathcal{F}$, $P_w \circ M[f_i](z)$ and $P_w \circ M[f_j](z)$ share $\alpha(z)$ IM. Hence, by applying Theorem \ref{theorem1}, $F$ is normal in $D$.
\end{proof}

\begin{proof} [\textbf{Proof of Theorem 1.6.}]
Suppose on contrary that $\mathcal{G}$ is not normal at the origin. We consider the following cases:\\
\textbf{Case I}: $\alpha(0)\neq 0$\\
By Zalcman's lemma \cite[p.216]{zalcman-1}, we can find a sequence $\left\{ f_j \right\}$ in $\mathcal{G}$, a sequence $\left\{ z_j\right\}$ of complex numbers with $z_j\rightarrow 0$ and a sequence $\left\{\rho_j\right\}$ of positive real numbers with $\rho_j \rightarrow 0$ such that
$$g_j (\zeta) =\rho_j ^{-\beta} f_j (z_j +\rho_j \zeta)$$
converges locally uniformly with respect to the spherical metric to a non constant meromorphic function $g(\zeta)$ on $\mC$ having bounded spherical derivative.\\
We take $\beta=\frac{\Gamma_M}{\gamma_M} -1.$\\
Hence $$M[f_j](z_j+\rho_j\zeta)-\alpha(z_j+\rho_j\zeta) \rightarrow M[g](\zeta)-\alpha(0).$$
Clearly, $M[g](\zeta)\not\equiv \alpha(0)$. Thus by Lemma \ref{lemma1} and Lemma \ref{lemma2}, $M[g](\zeta)-\alpha(0)$ has at least two distinct zeros. Suppose that $\zeta_0$ and $\tilde{\zeta} _0$ be two distinct zeros of $M[g](\zeta)-\alpha(0)$.
Consider $N_1 =\left\{\zeta\in\mathbb{C} : |\zeta-\zeta_0|<\delta\right\}$ and $N_2 =\left\{\zeta\in\mathbb{C} : |\zeta-\tilde{\zeta} _0|<\delta\right\}$, where $\delta(>0)$ is small enough number such that $N_1 \cap N_2 =\phi$ and $M[g](\zeta)-\alpha(0)$ has no other zeros in $N_1 \cup N_2$ except for $\zeta_0$ and $\tilde{\zeta} _0$. By Hurwitz theorem, there exist points $\zeta_j \rightarrow \zeta_0$ and $\tilde{\zeta}_j \rightarrow \tilde{\zeta}_0$ such that, for sufficiently large $j$, we have
$$M[f_j](z_j+\rho_j\zeta_j)-\alpha(z_j+\rho_j\zeta_j)=0,$$
$$M[f_j](z_j+\rho_j\tilde{\zeta}_j)-\alpha(z_j+\rho_j\tilde{\zeta}_j)=0.$$
Since, $M[f](z)$ and $M[g](z)$ share $\alpha(z)$ IM in $D$, we see for any integer $i$
$$M[f_i](z_j+\rho_j\zeta_j)-\alpha(z_j+\rho_j\zeta_j)=0,$$
$$M[f_i](z_j+\rho_j\tilde{\zeta}_j)-\alpha(z_j+\rho_j\tilde{\zeta}_j)=0.$$
For a fixed $i$, taking $j\rightarrow \infty$, we have 
$$M[f_i](0)-\alpha(0)=0.$$
Since, the zeros of $M[f_i](\zeta)-\alpha(0)$ have no accumulation point, so for sufficiently large $j$, we have
\begin{align*}&z_j+\rho_j \zeta_j=0,~~~~~~~~~z_j+\rho_j \tilde{\zeta}_j=0\\
&\text{or}~~~~~~~~\zeta_j=\frac{z_j}{\rho_j},~~~~~~~~~\tilde{\zeta}_j=\frac{z_j}{\rho_j}
\end{align*}
which is a contradiction to the fact that $\zeta_j\in N_1$ , $\tilde{\zeta}_j\in N_2$ and $N_1 \cap N_2 =\phi$.\\
\textbf{Case II}: $\alpha(0)= 0$\\
 We assume $\alpha(z)=z^p \alpha_1(z)$, where $p$ is a positive integer and $\alpha_1(0)\neq0$. We may take $\alpha_1(0)=1$. By Zalcman's lemma \cite[p.216]{zalcman-1}, we can find a sequence $\left\{ f_j \right\}$ in $\mathcal{G}$, a sequence $\left\{ z_j\right\}$ of complex numbers with $z_j\rightarrow z_0$ and a sequence $\left\{\rho_j\right\}$ of positive real numbers with $\rho_j \rightarrow 0$ such that
$$g_j (\zeta) =\rho_j ^{-\eta} f_j (z_j +\rho_j \zeta)$$
converges locally uniformly with respect to the spherical metric to a non constant meromorphic function $g(\zeta)$ on $\mC$ having bounded spherical derivative.\\
We take $\eta=\frac{\Gamma_M-\gamma_M+p}{\gamma_M}$\\
Now, we cosider the following subcases:\\
\textbf{Subcase I}: Suppose there exist a subsequence of $\frac{z_j}{\rho_j}$, for simplicity we take $\frac{z_j}{\rho_j}$ itself, such that $\frac{z_j}{\rho_j} \rightarrow \infty $ as $j \rightarrow \infty$.\\
Let $\Psi:=\left\{H_j(\zeta)=z_j ^{-\eta} f_j(z_j + z_j\zeta), \forall \left\{f_j\right\}\subset \mathcal{G}\right\}$. Thus by the given condition, we have 
 \begin{align*}H_j^{n_0}(\zeta) (H'_j)^{n_1}(\zeta) \cdots(H_j^{(k)} )^{n_k}(\zeta)&=(1+\zeta)^p \alpha_1(z_j + z_j\zeta)\\
 &\Leftrightarrow G_j^{n_0}(\zeta) (G'_j)^{n_1}(\zeta) \cdots(G_j^{(k)} )^{n_k}(\zeta)=(1+\zeta)^p \alpha_1(z_j + z_j\zeta),
\end{align*}
where $G_j(\zeta) =z_j ^{-\eta} g_j(z_j + z_j\zeta)$ for every $\left\{g_j\right\}\subset \mathcal{G}$. Thus, by case I, $\Psi$ is normal in $\mD$ and hence there exist a subsequence of $\left\{H_j\right\}$ in  $\Psi$, we may take $\left\{H_j\right\}$ itself, such that $H_j$ converges spherically uniformly to $H$(say) on \mD. We claim $H(0)=0$. Suppose $H(0)\neq0$, then we have
$$g_j(\zeta)=\rho^{-\eta} f_j(z_j+ \rho_j \zeta)=\left(\frac{z_j}{\rho_j}\right)^{\eta} H_j \left(\frac{\rho_j}{z_j}\zeta\right)$$ converges locally uniformly to $\infty$ on $\mathbb{C}$. This implies that $g(\zeta)\equiv \infty$, which is a contradiction. Thus the claim hold and hence $H'(0)\neq\infty$. Now, for any $\zeta\in\mathbb{C}$, we have 
\begin{align*}g'_j (\zeta)&= \rho_j^{-\eta+1} f'_j (z_j + \rho_j\zeta) \\
                          &= \left(\frac{\rho_j}{z_j}\right)^{-\eta +1} H'_j \left( \frac{\rho_j}{z_j}\zeta\right) \overset{\chi }{\mathop{\to }}\, 0
\end{align*}
on $\mathbb{C}$. Thus $g' (\zeta)\equiv 0$. This implies that $g$ is a constant, which is a contradiction.\\
\textbf{Subcase II}: Suppose there exist a subsequence of $\frac{z_j}{\rho_j}$, for simplicity we take $\frac{z_j}{\rho_j}$ itself, such that $\frac{z_j}{\rho_j} \rightarrow c $ as $j \rightarrow \infty$, where c is a finite number.\\
Let $\phi_j(\zeta)=\rho_j ^{-\eta}f_j(\rho_j \zeta)$. Then $$\phi_j(\zeta)=g_j\left(\zeta-\frac{z_j}{\rho_j}\right)\rightarrow g(\zeta-c):=\phi(\zeta).$$
Thus \begin{align*} \phi_j^{n_0}(\zeta) (\phi_j')^{n_1}(\zeta) &\cdots(\phi_j^{(k)} )^{n_k}(\zeta)-\zeta^p \\
                           &=g^{n_0}\left(\zeta-\frac{z_j}{\rho_j}\right) (g')^{n_1}\left(\zeta-\frac{z_j}{\rho_j}\right) \cdots(g^{(k)} )^{n_k}\left(\zeta-\frac{z_j}{\rho_j}\right)- \left(\zeta-\frac{z_j}{\rho_j}\right)^p\\
													&\rightarrow g^{n_0}(\zeta-c) (g')^{n_1}(\zeta-c) \cdots(g^{(k)} )^{n_k}(\zeta-c)- (\zeta-c)^p\\
													&:=\phi^{n_0}(\zeta) (\phi')^{n_1}(\zeta)\cdots(\phi^{(k)} )^{n_k}(\zeta)-\zeta^p
\end{align*}
Clearly, $\phi^{n_0}(\zeta) (\phi')^{n_1}(\zeta)\cdots(\phi^{(k)} )^{n_k}(\zeta)-\zeta^p \not\equiv0$. Thus by Lemma \ref{lemma1} and Lemma \ref{lemma2}, $\phi^{n_0}(\zeta) (\phi')^{n_1}(\zeta)\cdots(\phi^{(k)} )^{n_k}(\zeta)-\zeta^p$ has at least two distinct zeros. Thus by proceeding the same way as in Case I, we get a contradiction.
Hence $\mathcal{G}$ is normal in $D$.
\end{proof}

\begin{proof} [\textbf{Proof of Theorem 1.7.}]
Suppose  $\mathcal{F}$ is not normal at the origin. Then by Zalcman's lemma \cite[p.216]{zalcman-1}, we can find a sequence $\left\{ f_j \right\}$ in $\mathcal{F}$, a sequence $\left\{ z_j\right\}$ of complex numbers with $z_j\rightarrow 0$ and a sequence $\left\{\rho_j\right\}$ of positive real numbers with $\rho_j \rightarrow 0$ such that
$$g_j (\zeta) =\rho_j ^{-\beta} f_j (z_j +\rho_j \zeta)$$
converges locally uniformly with respect to the spherical metric to a non constant meromorphic function $g(\zeta)$ on $\mC$ having bounded spherical derivative.\\
We take $\beta=\frac{\Gamma_M}{\gamma_M} -1.$\\
Hence, \begin{align}P_w \circ M[f_j](z_j+\rho_j\zeta)-\alpha(z_j+\rho_j\zeta)\rightarrow P_w \circ M[g](\zeta)-\alpha(0).\label{7}\end{align}
Clearly, $P_w \circ M[g](\zeta)\not\equiv\alpha(0)$. We claim that $P_w \circ M[g](\zeta)-\alpha(0)$ has at most one zero. Suppose that $P_w \circ M[g](\zeta)-\alpha(0)$ has two distinct zeros say $\zeta_1$ and $\zeta_2$. Then by (\ref{7}) and Hurwitz theorem, there exist points $\zeta_{j1}\rightarrow \zeta_1 $ and $\zeta_{j2}\rightarrow \zeta_2 $ such that 
$$P_w \circ M[f_j](z_j+\rho_j\zeta_{j1})-\alpha(z_j+\rho_j\zeta_{j1})=0$$ and
$$P_w \circ M[f_j](z_j+\rho_j\zeta_{j2})-\alpha(z_j+\rho_j\zeta_{j2})=0,$$
for sufficiently large $j$. Since, $z_j+\rho_j\zeta_{ji}\rightarrow 0 (i=1,2)$ and $P_w \circ M[f_j](z_j+\rho_j\zeta)-\alpha(z_j+\rho_j\zeta)$ has at most one zero, we get a contradiction. Thus the claim holds. But as shown in the proof of Theorem \ref{theorem1}, $P_w \circ M[g](\zeta)-\alpha(0)$ has at least two distinct zeros and this contradicts our claim. Hence $\mathcal{F}$ is normal in $D$.
\end{proof}

\begin{proof} [\textbf{Proof of Theorem 1.8.}]
Suppose $\mathcal{F}$ is not normal at the origin. Then as in the proof of the Theorem \ref{theorem3}, we find that 
 $$P_w \circ M[f_j](z_j+\rho_j\zeta)-\alpha(z_j+\rho_j\zeta)\rightarrow P_w \circ M[g](\zeta)-\alpha(0).$$
Clearly, $P_w \circ M[g](\zeta)\not\equiv\alpha(0)$. As shown in the proof of Theorem \ref{theorem1}, $P_w \circ M[g](\zeta)-\alpha(0)$ has at least one zero say $\zeta_0$ and hence $g(\zeta_0)\neq\infty$. By Hurwitz theorem, for sufficiently large $j$, there exist a sequence $\zeta_j\rightarrow\zeta_0$ as $j\rightarrow \infty$ such that
$$P_w \circ M[f_j](z_j+\rho_j\zeta_j)-\alpha(z_j+\rho_j\zeta_j)=0.$$
Thus by the given condition, $P_w \circ M[f](z)=\alpha(z)$ implies $\left|f(z)\right|\geq M$, we have
\begin{align*} \left|g_j(\zeta_j)\right|&=\rho^{-\beta}_j\left|f_j(z_j+\rho_j\zeta_j)\right| \\
                          &\geq \rho^{-\beta}_jM.
 \end{align*}
Since $g(\zeta_0)\neq\infty$ in some neighborhood of $\zeta_0$ say $N_{\zeta_0}$, it follows that for sufficiently large values of $j$, $g_j(\zeta)$ converges uniformly to $g(\zeta)$ in $N_{\zeta_0}$. Thus for $\epsilon>0$ and for every $\zeta\in N_{\zeta_0}$, we have 
$$\left|g_j(\zeta)-g(\zeta)\right|<\epsilon.$$
Therefore, for sufficiently large values of $j$, we have
$$\left|g(\zeta_j)\right|\geq\left|g_j(\zeta_j)\right|-\left|g(\zeta_j)-g_j(\zeta_j)\right|\geq\rho^{-\beta}_jM-\epsilon,$$
which is a contradiction to the fact that $\zeta_0$ is not a pole of $g(\zeta)$. Hence $\mathcal{F}$ is normal in $D$.
\end{proof}

\bibliographystyle{amsplain}

\end {document}